\newcommand{\R}{\mathbb{R}}
\newcommand{\Z}{\mathbb{Z}}
\newcommand{\N}{\mathbb{N}}
\newcommand{\K}{\mathcal{K}}
\newcommand{\conv}{\mathrm{conv}}
\newcommand{\spann}{\mathrm{span}}
\newcommand{\supp}{\mathrm{supp}}
\newcommand{\bd}{\mathrm{bd}}
\newcommand{\inter}{\mathrm{int}}
\newcommand{\vol}{\mathrm{vol}}
\newcommand{\suk}{\mathrm{h}}
\newcommand{\cs}{\mathrm{cs}}
\newcommand{\norm}[2]{\left|#1\right|_{\cs(#2)}}
\newcommand{\sukz}{\lambda}
\newcommand{\LE}{\mathrm{G}}
\newcommand{\blaschke}{\mathrm{sh}}
\title[Lattice points, volume and successive minima]{Interpolating between volume and lattice point enumerator with successive minima}
\author{Ansgar Freyer and Eduardo Lucas}
\address{Technische Universität Berlin, Institut für Mathematik, Sekr. MA4-1, Stra{\ss}e des 17 Juni 136, D-10623 Berlin}
\address{Departamento de Matem\'aticas, Universidad de Murcia, Campus de Espinar\-do, 30100-Murcia, Spain}
\email{freyer@math.tu-berlin.de, eduardo.lucas@um.es}
\date{}
\thanks{The research of the second author is part of the project PGC2018-097046-B-I00, supported by MCIN/ AEI/10.13039/501100011033/ FEDER “Una manera de hacer Europa”.}
\numberwithin{equation}{section}
\begin{document}


\theoremstyle{plain}
\newtheorem{theorem}{Theorem}[section]
\newtheorem{lemma}[theorem]{Lemma}
\newtheorem{corollary}[theorem]{Corollary}
\newtheorem{conjecture}{Conjecture}
\newtheorem{proposition}[theorem]{Proposition}
\newtheorem*{question}{Question}
\newtheorem{thmx}{Theorem}
\renewcommand{\thethmx}{\Alph{thmx}}
\newtheorem{lemmax}[thmx]{Lemma}

\theoremstyle{definition}
\newtheorem*{definition}{Definition}

\newtheorem{remark}[theorem]{Remark}
\newtheorem{claim}{Claim}
\newtheorem*{remark*}{Remark}
\newtheorem{example}[theorem]{Example}

\begin{abstract} We study inequalities that simultaneously relate the number of lattice points, the volume and the successive minima of a convex body to one another. One main ingredient in order to establish these relations is Blaschke's shaking procedure, by which the problem can be reduced from arbitrary convex bodies to anti-blocking bodies. As a consequence of our results, we obtain an upper bound on the lattice point enumerator in terms of the successive minima, which is equivalent to Minkowski's upper bound on the volume in terms of the successive minima.
\end{abstract}

\subjclass{11P21, 52A10, 52A20, 52C05, 52C07}
\keywords{Lattice points in convex bodies, Successive minima, Minkowski's Second Theorem, Blaschke's shaking procedure}

\maketitle

\section{Introduction and Results}
\label{sec:intro}
Let $\K^n$ denote the class of all compact convex sets in $\R^n$ with non-empty interior.
For $K\in\K^n$ with $K=-K$, the $i$-th successive minimum is defined as 
\begin{equation}
\label{eq:lambdai}
\sukz_i(K) = \min\big\{ \sukz>0:\dim(\sukz K\cap\Z^n) \geq i\big\},
\end{equation}
for $i=1,\dots,n$. More generally, if $K$ is not necessarily symmetric, we define\linebreak $\sukz_i(K) = \sukz_i(\cs(K))$, where $\cs(K)=\frac{1}{2}(K-K)$. The successive minima have originally been introduced by Minkowski and he related them to the volume in the following way:
\begin{equation}
\label{eq:minkowski2ndtheorem}
\frac{1}{n!}\prod_{i=1}^n \frac{2}{\sukz_i(K)}\leq \vol(K)\leq\prod_{i=1}^n\frac{2}{\sukz_i(K)}.
\end{equation}
This classical result is known as Minkowski's Second Theorem on successive minima.
For origin-symmetric $K$, this has been proven by Minkowski \cite[Ch.2, Theorems 9.1 and 9.2]{geometryofnumbers}. For general $K\in\K^n$, the upper bound follows directly from the inequality $\vol(K) \leq \vol(\cs (K))$, which in turn is a special case of the Brunn-Minkowski inequality \cite[Ch.1, Theorem 1.7]{geometryofnumbers}. The lower bound can also be proved by an inclusion argument, similar to the symmetric case: One considers the convex hull of the $n$ segments in $K$ that realize the $\lambda_i(K)$ \cite[Remark 1.1]{henkhenzehernandez}.

Many alternatives to Minkowski's complicated original proof have been obtained. One of the first short proofs was given by Davenport \cite{davenportmink}. More analytic proofs were obtained by Weyl \cite{weyl} and Estermann \cite{estermann}; and Bambah, Woods and Zassenhaus provided three new proofs in \cite{bwz}. A more recent example was obtained by Henk \cite{Henk2002}.

The result has been extended, for instance, to more general successive minima by Hlawka \cite[Sec.~9.5]{geometryofnumbers}; to more general discrete sets, not necessarily lattices, by Woods \cite{woods}; to intrinsic volumes by Henk \cite{henk_intrinsic}; or to surface area measures by Henk, Henze and Hernández Cifre \cite{henkhenzehernandez}.

The lower and upper bound in \eqref{eq:minkowski2ndtheorem} are attained, e.g., by simplices and cubes respectively. Betke, Henk and Wills studied the relation of the lattice point enumerator $\LE(K) = |K\cap\Z^n|$ to the successive minima of $K$ and obtained for $K=-K$ that
\begin{equation}
\label{eq:minkowskibhw}
\frac{1}{n!}\prod_{i=1}^n\Big(\frac{1}{\sukz_i(K)}-1\Big) \leq \LE(K) \leq \prod_{i=1}^n\Big(\frac{2i}{\sukz_i(K)} +1\Big),
\end{equation}
where for the lower bound $\sukz_n(K)\leq 2$ is needed \cite[Proposition 2.1 and Corollary 2.1]{bhw}. While the lower bound is best-possible, it is conjectured that the upper bound can be strengthened as follows \cite[Conjecture 2.1]{bhw}:
\begin{conjecture}[Betke, Henk, Wills]
Let $K\in\K^n$ and $\sukz_i = \sukz_i(K)$. Then one has
\begin{equation}
\label{eq:discminkconj}
\LE(K)\leq \prod_{i=1}^n\Big\lfloor\frac{2}{\sukz_i}+1\Big\rfloor,
\end{equation}
where for a real number $x\in\R$, $\lfloor x \rfloor = \max\{z\in\Z: z\leq x\}$ denotes the floor function.
\end{conjecture}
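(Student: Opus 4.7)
The plan is to model the argument on the classical proof of the upper bound in Minkowski's Second Theorem \eqref{eq:minkowski2ndtheorem}, reducing the problem to anti-blocking bodies via Blaschke's shaking procedure and then handling that canonical class directly. The abstract already signals this strategy, since Blaschke's shaking is singled out as the central technical tool used in the paper.

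First, I would apply Blaschke's shaking iteratively along the coordinate directions $\ve_1,\dots,\ve_n$ to obtain an anti-blocking body $\tilde{K}\subseteq\R^n_{\geq 0}$ with $\vol(\tilde{K})=\vol(K)$. A standard observation shows that this operation does not decrease the lattice point enumerator: the intersection of $K$ with a line through a lattice point parallel to a shaken direction is an interval of some length $\ell$, containing at most $\lfloor\ell\rfloor+1$ integers, while the corresponding slice of $\tilde{K}$ is $[0,\ell]$, which contains exactly $\lfloor\ell\rfloor+1$ integers; iterating over all $n$ directions gives $\LE(\tilde{K})\geq\LE(K)$. For an anti-blocking body, setting $b_i=\max_{\vx\in\tilde{K}}x_i$, one has the trivial box bound
\begin{equation*}
\LE(\tilde{K})\leq\prod_{i=1}^n(\lfloor b_i\rfloor+1)=\prod_{i=1}^n\lfloor b_i+1\rfloor.
\end{equation*}
The remaining task is to show that, after a suitable permutation of coordinates, $b_i\leq 2/\sukz_i(K)$ for all $i$; monotonicity of the floor function then yields the conjectured inequality \eqref{eq:discminkconj}.

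The principal obstacle is this last step: relating the coordinate widths $b_i$ of $\tilde{K}$ to the successive minima $\sukz_i(K)$ of the symmetrization $\cs(K)$. Shaking does not interact cleanly with the operator $\cs$: the body $\cs(\tilde{K})$ is not a simple transform of $\cs(K)$, and the successive minima may shift under shaking, so a direct comparison is not available. A natural attempt is to first apply a unimodular change of coordinates adapted to a reduced lattice basis associated to the successive minima of $K$, and only then perform the shaking; one would then hope that the coordinate widths $b_i$ of the resulting anti-blocking body inherit the desired upper bound from the geometry of $\cs(K)$. Making this rigorous while keeping the floor function intact appears to require ideas genuinely beyond those used in the proof of Minkowski's Second Theorem, which is consistent with \eqref{eq:discminkconj} having remained conjectural since its formulation by Betke, Henk and Wills, even though the weaker bound in \eqref{eq:minkowskibhw} and the volume inequality in \eqref{eq:minkowski2ndtheorem} are known.
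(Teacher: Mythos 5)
This statement is an open conjecture: the paper does not prove it (it is only known for $n=2,3$, due to Betke--Henk--Wills and Malikiosis respectively), and your proposal does not prove it either --- as you yourself concede in the final paragraph. So the honest verdict is that there is a genuine, and in fact well-identified, gap, and it is worth being precise about why your reduction cannot be repaired in the form you describe. The shaking step and the box bound are both fine: iterated coordinate shakings produce an anti-blocking body $\tilde K$ with $\LE(\tilde K)\geq \LE(K)$ (Lemma \ref{lemma:blaschke} v)), and for an anti-blocking body one even has, after permuting coordinates, $b_i=\suk(\tilde K,e_i)=2/\sukz_i(\tilde K)$ exactly (this follows from Lemma \ref{lemma:succminantiblocking}: $\frac{2}{\sukz_i(\tilde K)}e_i\in\tilde K$ gives one inequality, and $\tilde K\subseteq\R^n_{\geq 0}$ with $0\in\tilde K$ gives the other). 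Hence the conjecture is essentially trivial \emph{for anti-blocking bodies}: $\LE(\tilde K)\leq\prod_i\lfloor 2/\sukz_i(\tilde K)+1\rfloor$.

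The fatal problem is the direction of the comparison between $\sukz_i(\tilde K)$ and $\sukz_i(K)$. Shaking can only \emph{decrease} the successive minima (Theorem \ref{prop:anti-blocking} iv)), i.e.\ $\sukz_i(\tilde K)\leq\sukz_i(K)$, so $2/\sukz_i(\tilde K)\geq 2/\sukz_i(K)$ and
\begin{equation}
\LE(K)\;\leq\;\LE(\tilde K)\;\leq\;\prod_{i=1}^n\Big\lfloor\frac{2}{\sukz_i(\tilde K)}+1\Big\rfloor\;\geq\;\prod_{i=1}^n\Big\lfloor\frac{2}{\sukz_i(K)}+1\Big\rfloor,
\end{equation}
where the last comparison points the wrong way. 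Your hoped-for estimate $b_i\leq 2/\sukz_i(K)$ is therefore false in general: one has $b_i=2/\sukz_i(\tilde K)\geq 2/\sukz_i(K)$, with strict inequality whenever shaking strictly shrinks a successive minimum. This is exactly why the reduction to anti-blocking bodies works in the paper only for bounds of the form $\vol(K)\prod(1+c_i\sukz_i)$, which are \emph{increasing} in each $\sukz_i$, and fails for the conjectured bound, which is decreasing in each $\sukz_i$. A preliminary unimodular change of basis does not help, since the minima are computed from $\cs(K)$ and are genuinely altered by the shaking itself, not by the choice of coordinates. Any correct approach to \eqref{eq:discminkconj} must control $\LE(K)$ directly in terms of the minima of $K$; the paper's own contribution in this direction is the weaker Corollary \ref{cor:discrminkn}, and the best known general bound remains \eqref{eq:malikiosis}.
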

Equality would be attained, e.g., for boxes of the form $[-m_1,m_1]\times...\times [-m_n,m_n]$, where $m_i\in\Z_{>0}$. In dimension 2 the conjecture has been confirmed by Betke, Henk and Wills themselves \cite[Theorem 2.2]{bhw} and in dimension 3 it has been shown by Malikiosis \cite[Section 3.2]{malikiosis}. Moreover Malikiosis also proved that  \cite[Theorem 3.2.1]{malikiosis} 
\begin{equation}
\label{eq:malikiosis}
\LE(K) \leq \frac{4}{e}\big(\sqrt{3}\big)^{n-1} \prod_{i=1}^n \Big(\frac{2}{\lambda_i}+1\Big),
\end{equation}
where again $\lambda_i=\lambda_i(K)$. To this day, \eqref{eq:malikiosis} is the best known upper bound for $\LE(K)$ in terms of the successive minima in general dimension.

Betke, Henk and Wills also pointed out in \cite[Proposition 2.2]{bhw} that any inequality of the form 
\begin{equation}
\label{eq:discminkconjweak}
\LE(K)\leq \prod_{i=1}^n\Big(\frac{2}{\sukz_i}+c_i\Big),
\end{equation}
for some numbers $c_i$, $1\leq i\leq n$, independent of $K$ (but not necessarily of $n$), would imply the upper bound in Minkowski's Second Theorem \eqref{eq:minkowski2ndtheorem}. Indeed, one can asymptotically approximate the volume of $K$ by the lattice point enumerator with respect to progressively finer lattices (using the properties of the Riemann integral), to which \eqref{eq:discminkconjweak} could then be applied, and the resulting limit is precisely Minkowski's bound. 

In this paper, we use Minkowski's Second Theorem to show \eqref{eq:discminkconjweak} with $c_i=n$ (cf.\ Corollary \ref{cor:discrminkn}). In order to do so, we aim to express the deviation between $\LE(K)$ and $\vol(K)$ in terms of the successive minima $\lambda_i(K)$, $i=1,...,n$.
Our approach stems from another conjecture by Betke, Henk and Wills that relates the volume, the lattice point enumerator and the successive minima simultaneously.

\begin{conjecture}[Betke, Henk, Wills]
\label{conj:bhw2}
Let $K\in\K^n$ and $\sukz_i = \sukz_i(K)$. Then,
\begin{equation}
\label{eq:conjupperbound}
\LE(K) \leq \vol(K) \prod_{i=1}^n \Big(1+\frac{i\,\sukz_i}{2}\Big)
\end{equation}
and, if $\lambda_n \leq \frac{2}{n}$,
\begin{equation}
\label{eq:conjlowerbound}
\LE(\inter K) \geq \vol(K) \prod_{i=1}^n \Big(1-\frac{i\,\sukz_i}{2}\Big),
\end{equation}
where $\inter K$ denotes the interior of $K$.
Moreover, if $K=-K$ and $\lambda_n\leq 2$, we have 
\begin{equation}
\label{eq:conjlowerboundsym}
\LE(\inter K) \geq \vol(K) \prod_{i=1}^n \Big(1-\frac{\sukz_i}{2}\Big). 
\end{equation}
\end{conjecture}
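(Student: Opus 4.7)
The plan is to follow the strategy suggested by the abstract and reduce to the anti-blocking case via Blaschke's shaking procedure, then verify the inequality in that restricted setting by induction on dimension. First, I would fix a basis $b_1,\dots,b_n$ of $\Z^n$ realizing the successive minima of $\cs(K)$, so that $\lambda_i b_i \in \cs(K)$, and, after a unimodular change of coordinates, assume $b_i = e_i$. The objective is to perform a sequence of shakings along the coordinate directions that preserves $\vol(K)$, moves $\LE(K)$ in the favourable direction, and controls the $\lambda_i$, producing an anti-blocking body $K^\star \subset \R^n_{\geq 0}$ on which the inequality can be proved by hand.

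Specifically, I would apply Blaschke shaking in each of the directions $e_1,\dots,e_n$ in turn: each chord of the body parallel to $e_i$ is slid so that one of its endpoints lies on the hyperplane $e_i^\perp$. Iterating over $i=1,\dots,n$ produces an anti-blocking body $K^\star$ with $\vol(K^\star) = \vol(K)$ and $\LE(K^\star) \geq \LE(K)$ (chord by chord, one obtains at least as many lattice points on $[0,L]\cap\Z$ as on any translate). The critical compatibility, which is likely the main technical obstacle, is that the shakings should not worsen the right-hand side of \eqref{eq:conjupperbound}. Since the successive minima are defined through $\cs$, and $\cs$ does not commute with shaking, this amounts to a monotonicity statement of the form $\lambda_i(\cs(K^\star)) \leq \lambda_i(\cs(K))$. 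Establishing it will require a careful analysis of how the difference body transforms under shaking, leveraging that shaking preserves the lengths of chords in the shaken direction while merely shifting them in the perpendicular hyperplane.

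For the anti-blocking body $K^\star$, I would proceed by induction on $n$ via coordinate slicing. Writing $K_k^\star = \{y \in \R^{n-1} : (y,k) \in K^\star\}$ for $k \in \Z_{\geq 0}$, each slice is an anti-blocking body in $\R^{n-1}$ whose successive minima can be bounded in terms of $\lambda_1,\dots,\lambda_{n-1}$ using the coordinate-wise monotonicity of the anti-blocking structure. Applying the induction hypothesis to each slice and summing $\LE(K^\star) = \sum_{k\geq 0}\LE(K_k^\star)$, the induction step reduces to a one-dimensional comparison of the form $\sum_{k\geq 0} f(k) \leq (1 + n\lambda_n/2)\int_0^\infty f(t)\,dt$, valid for the non-increasing function $f(t) = \vol(K_t^\star)$ supported on $[0,2/\lambda_n]$. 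This closes the induction and yields \eqref{eq:conjupperbound}.

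For the lower bounds \eqref{eq:conjlowerbound} and \eqref{eq:conjlowerboundsym} the scheme is analogous, now requiring shakings with $\LE(\inter K^\star) \leq \LE(\inter K)$ and the reverse one-dimensional comparison $\sum_{k\geq 0} f(k) \geq (1 - n\lambda_n/2)\int_0^\infty f(t)\,dt$ in the induction. The hypotheses $\lambda_n\leq 2/n$ (respectively $\lambda_n\leq 2$) are precisely what keep the resulting factors non-negative. The stronger symmetric bound \eqref{eq:conjlowerboundsym}, which removes the factor $i$, should come from using $K = -K$ to pair lattice points $v$ and $-v$ inside $\inter K$, halving the combinatorial overcount in each dimension.
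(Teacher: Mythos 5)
This statement is \emph{not proved} in the paper: it is stated as an open conjecture of Betke, Henk and Wills, and the paper only establishes weakenings of it (Theorem \ref{thm:dimn}, with the factor $n$ in place of $i$; Theorem \ref{thm:dim2} in the plane; Proposition \ref{prop:polygons} for symmetric lattice polygons). Your reduction to anti-blocking bodies is sound and is exactly what the paper does (Theorem \ref{prop:anti-blocking}), but the inductive slicing argument that is supposed to finish the proof has a gap that I do not see how to repair. The slices $K_k^\star=\{y:(y,k)\in K^\star\}$ satisfy $K_{k+1}^\star\subseteq K_k^\star$ because $K^\star$ is anti-blocking, so for $k\geq 1$ their successive minima are \emph{larger} than those of $K^\star$, not smaller; near the top of the body they blow up. The induction hypothesis therefore gives $\LE(K_k^\star)\leq \vol_{n-1}(K_k^\star)\prod_{i<n}\bigl(1+i\,\sukz_i(K_k^\star)/2\bigr)$ with factors you cannot replace by $\prod_{i<n}(1+i\sukz_i(K^\star)/2)$, and the sum over $k$ does not reduce to the claimed one-dimensional comparison for $f(k)=\vol_{n-1}(K_k^\star)$. (A secondary issue: even that comparison, $\sum_{k\geq 0}f(k)\leq(1+n\sukz_n/2)\int_0^\infty f$, is false for a general non-increasing $f$ supported on $[0,2/\sukz_n]$ — take $f$ concentrated near $0$; it does hold for the actual $f$, but only via Brunn--Minkowski, which gives $f(t)\geq f(0)(1-t\sukz_n/2)^{n-1}$ and hence $f(0)\leq (n\sukz_n/2)\int f$. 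You must invoke this.)

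For comparison, the paper circumvents the slicing problem in general dimension by combining Davenport's inequality \eqref{eq:mixvol} with the Rogers--Shephard inequality and the simplex $\conv\{(2/\sukz_i)e_i:i\in I\}\subseteq K\cap L_I$; the price is the factor $k!\binom{n}{k}\leq n^k$, which is precisely why only $\prod(1+n\sukz_i/2)$ is obtained. In the plane the paper does run a slicing argument close in spirit to yours, but it needs an additional non-orthogonal shaking to force $K\subseteq\{x_1+x_2\leq 2/\sukz_1\}$ before the concavity estimates close; nothing analogous is provided in your sketch for $n\geq 3$. Finally, the symmetric bound \eqref{eq:conjlowerboundsym} cannot be reached by "pairing $v$ with $-v$" after your reduction, because the shaking procedure destroys central symmetry; the paper only verifies this case for lattice polygons in dimension $2$, via Pick's theorem and the Henk--Schürmann--Wills inequality \eqref{eq:hsw}. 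As it stands, your proposal proves neither \eqref{eq:conjupperbound} nor \eqref{eq:conjlowerbound} beyond what is already in the paper.
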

The bound \eqref{eq:conjlowerboundsym} is stated as Conjecture 2.2 in \cite{bhw}, where it is formulated for arbitrary $n$-dimensional lattices. However, there is no loss of generality in restricting to the integer lattice $\Z^n$. \eqref{eq:conjupperbound} and \eqref{eq:conjlowerbound} have been communicated to the authors by Martin Henk personally. In the general case, we obtain the following weakenings of \eqref{eq:conjupperbound} and \eqref{eq:conjlowerbound}:

\begin{theorem}
\label{thm:dimn}
Let $K\in\K^n$ and $\sukz_i=\sukz_i(K)$, $i\in [n]$. Then we have
\begin{equation}
\label{eq:upperboundn}
\LE(K) \leq \vol(K)\prod_{i=1}^n\left(1+\frac{n\sukz_i}{2}\right).
\end{equation}
Moreover, if $\sukz_n\leq \frac{2}{n}$, we have 
\begin{equation}
    \label{eq:lowerboundn}
    \LE(\inter K) \geq \vol(K) \prod_{i=1}^n\left(1-\frac{n\sukz_i}{2}\right).
\end{equation}
\end{theorem}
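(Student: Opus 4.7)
Following the strategy indicated in the abstract, I plan to reduce to the class of anti-blocking bodies via Blaschke's shaking procedure and then prove the inequalities \eqref{eq:upperboundn} and \eqref{eq:lowerboundn} directly in that setting. Let $\blaschke_j$ denote the Blaschke shaking with respect to the coordinate hyperplane $e_j^\perp$, oriented so that the image lies in the half-space $\{x_j \ge 0\}$. The iterated composition $\blaschke_n \circ \cdots \circ \blaschke_1$ then transforms $K$ into an anti-blocking body $K^\ast \subseteq \R^n_{\ge 0}$. The reduction rests on verifying three properties of coordinate shaking: (i) $\vol(\blaschke_j K) = \vol(K)$, a standard fact about Blaschke shaking; (ii) $\LE(\blaschke_j K) \ge \LE(K)$ and $\LE(\inter \blaschke_j K) \ge \LE(\inter K)$, since on each one-dimensional fibre parallel to $e_j$ the shaken segment $[0,L]$ contains the maximum possible number of integers for a segment of length $L$; and (iii) $\sukz_i(\blaschke_j K) = \sukz_i(K)$ for every $i = 1,\dots,n$. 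Granted these, the inequalities for $K^\ast$ immediately transfer back to $K$.

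For the anti-blocking case, the half-open cubes $z + [0,1)^n$ with $z \in K \cap \Z^n_{\ge 0}$ are pairwise disjoint and contained in $K + [0,1]^n$, yielding $\LE(K) \le \vol(K + [0,1]^n)$. Exploiting the anti-blocking structure via the iterative identity $\vol(L + [0,1]e_j) = \vol(L) + \vol_{n-1}(\pi_{\hat\jmath} L)$ leads to
\[
\vol(K + [0,1]^n) = \sum_{I \subseteq [n]} \vol_{|I|}(\pi_I K),
\]
where for anti-blocking $K$ the projection $\pi_I K$ coincides with the section $K \cap \R^I$. Bounding each summand by applying the upper estimate of \eqref{eq:minkowski2ndtheorem} to the section $K \cap \R^I$ (and noting that its successive minima are at least those of $K$), and combining with Minkowski's lower bound $\vol(K) \ge \tfrac{1}{n!}\prod_i 2/\sukz_i$, a combinatorial reorganisation should produce the bound $\vol(K)\prod_i(1 + n\sukz_i/2)$; the factor $n$ arises from the combinatorial overcounting over subsets of $[n]$. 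The lower bound \eqref{eq:lowerboundn} is then approached dually: for anti-blocking $K$ and any lattice point $z$ with $z_i \ge 1$ for every $i$, the cube $z + [-1,0]^n$ lies in $K$, and an inclusion--exclusion analysis on the boundary slabs relates $\LE(\inter K)$ to $\vol(K)$ minus correction terms controlled by the same projection volumes. The hypothesis $\sukz_n \le 2/n$ is precisely what guarantees positivity of the factors $(1 - n\sukz_i/2)$.

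\textbf{Main obstacle.} The most delicate step is property (iii): the invariance of the successive minima under coordinate Blaschke shaking. The difference body is \emph{not} monotone under shaking --- small examples show that neither $\cs(\blaschke_j K) \subseteq \cs(K)$ nor the reverse inclusion holds in general --- so the equality $\sukz_i(\blaschke_j K) = \sukz_i(K)$ requires a genuinely finer geometric argument. A natural approach is to show that a family of lattice vectors realising the successive minima of $\cs(K)$ can, using the coordinate structure of the shaking and the lattice $\Z^n$, be transported to a family of lattice vectors of no greater $\cs$-norm in $\cs(\blaschke_j K)$. I expect this invariance to be the technical heart of the reduction.
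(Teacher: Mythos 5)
Your overall architecture for the upper bound (shake to an anti-blocking body, then apply a Davenport-type decomposition $\vol(K+[0,1]^n)=\sum_{I}\vol_{|I|}(K\cap L_I)$) matches the paper's, but the two load-bearing steps are missing. First, the step you flag as the ``main obstacle'' is indeed unresolved, though you are aiming at more than you need: exact invariance $\sukz_i(\blaschke_j K)=\sukz_i(K)$ is not required (and is doubtful); the monotonicity $\sukz_i(A)\leq\sukz_i(K)$ for the final body $A$ suffices, since the factors $1+n\sukz_i/2$ are increasing and $1-n\sukz_i/2$ decreasing in $\sukz_i$. The paper proves this monotonicity by choosing the minima-realizing vectors in Hermite normal form and showing inductively that after $j$ shakings one can replace them by lattice vectors of no larger gauge whose matrix is diagonal in the first $j$ coordinates; the key mechanism is that $\norm{x|u^\perp}{\blaschke_u(K)}\leq\norm{x}{K}$ and $\norm{u}{\blaschke_u(K)}=\norm{u}{K}$. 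Without some such argument your reduction is not established. (Also, your inequality $\LE(\inter\blaschke_j K)\geq\LE(\inter K)$ is reversed: an open segment of length $L$ starting at a lattice point contains the \emph{minimum} number of integers, so shaking decreases the interior count --- which is the direction actually needed for the lower-bound reduction.)

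Second, in the anti-blocking case your plan to bound each summand by applying Minkowski's upper bound to the section $K\cap L_I$ does not produce the claimed product. That route gives $\vol_{|I|}(K\cap L_I)\leq\prod_{j=1}^{|I|}2/\sukz_j(K)$, i.e.\ a product over the \emph{first} $|I|$ minima rather than over the index set $I$, and combining with Minkowski's lower bound introduces a factor $n!$ per term; the resulting sum does not telescope into $\prod_i(1+n\sukz_i/2)$. The paper instead bounds the \emph{projections} via the Rogers--Shephard inequality $\vol_k(K\cap L_I)\,\vol_{n-k}(K|L_I^\perp)\leq\binom{n}{k}\vol(K)$, together with the fact (requiring a separate lemma) that for anti-blocking $K$ the minima are attained by the standard basis and $(2/\sukz_i)e_i\in K$, so that $K\cap L_I$ contains the simplex $\conv\{(2/\sukz_i)e_i:i\in I\}$; this yields $\vol_{n-k}(K|L_I^\perp)\leq\vol(K)\prod_{i\in I}n\sukz_i/2$ and hence the product. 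Finally, your lower bound is only a vague sketch; the paper does not use the anti-blocking reduction there at all, but proves $\LE(\inter K)\geq\vol(K)(1-\mu(K))^n$ via van der Corput's theorem, invokes the Kannan--Lov\'asz bound $\mu(K)\leq\sum_i\sukz_i/2$, and finishes with the AM--GM inequality $\bigl(\tfrac1n\sum_i(1-n\sukz_i/2)\bigr)^n\geq\prod_i(1-n\sukz_i/2)$. Some concrete substitute for these ingredients is needed before your proposal constitutes a proof.
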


From this we can deduce immediately, by applying the upper bound in \eqref{eq:minkowski2ndtheorem} to the volume in \eqref{eq:upperboundn}, the following inequality:

\begin{corollary}
\label{cor:discrminkn}
Let $K\in\K^n$ and $\sukz_i=\sukz_i(K)$, $i\in [n]$. Then we have
\begin{equation}
\label{eq:discrmink}
\LE(K)\leq\prod_{i=1}^n\left(\frac{2}{\sukz_i} +n\right).
\end{equation}
\end{corollary}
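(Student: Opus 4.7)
The plan is to chain together two upper bounds: the estimate from Theorem \ref{thm:dimn} that controls $\LE(K)$ in terms of $\vol(K)$ and the successive minima, and the classical upper bound of Minkowski from \eqref{eq:minkowski2ndtheorem} that controls $\vol(K)$ in terms of the successive minima alone. First I would invoke \eqref{eq:upperboundn} to write
\[
\LE(K) \leq \vol(K) \prod_{i=1}^n \left(1+\frac{n\sukz_i}{2}\right).
\]
Next, I would apply the upper inequality in \eqref{eq:minkowski2ndtheorem} to bound $\vol(K) \leq \prod_{i=1}^n 2/\sukz_i$. Pairing the two products factor by factor, the key algebraic observation is that
\[
\frac{2}{\sukz_i}\cdot\left(1+\frac{n\sukz_i}{2}\right) = \frac{2}{\sukz_i}+n,
\]
so multiplying over $i=1,\dots,n$ immediately yields \eqref{eq:discrmink}.

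There is no genuine obstacle here: both ingredients are on hand — Theorem \ref{thm:dimn} is the main result of the paper, and Minkowski's bound is classical — and the remaining step is just a one-line algebraic manipulation. A minor point worth noting is that the hypothesis $\sukz_n \leq 2/n$ from the lower-bound half of Theorem \ref{thm:dimn} is \emph{not} needed, since the deduction uses only the unconditional upper bound \eqref{eq:upperboundn}; this is why Corollary \ref{cor:discrminkn} is stated for arbitrary $K\in\K^n$ without restriction on the last successive minimum.
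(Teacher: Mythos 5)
Your proposal is correct and matches the paper's own deduction exactly: the corollary is obtained by applying the upper bound of Minkowski's Second Theorem \eqref{eq:minkowski2ndtheorem} to the volume factor in \eqref{eq:upperboundn} and pairing the factors as you describe. Your remark that the hypothesis $\sukz_n\leq 2/n$ is not needed is also accurate.
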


While our bound is tight for convex bodies $rK$, $r\to\infty$, it is weaker than Malikiosis's bound \eqref{eq:malikiosis}, if, e.g., $\lambda_i(K) = 1/c$ for some fixed number $c>0$. Then our bound is of order $n^n$, while the bound in \eqref{eq:malikiosis} is of order $\sqrt{3}^n$.

The proof of Theorem \ref{thm:dimn} makes use of an inequality of Davenport \cite{davenport}, which states that for any convex body $K\in\K^n$ one has the bound 
\begin{equation}
\label{eq:mixvol}
\LE(K) \leq \sum_{k=1}^n\,\sum_{I\in {[n]\choose k}} \vol_{n-k}(K|L_I^\perp),
\end{equation}
where $L_I=\spann\{e_i:i\in I\}$ and $K|L_I^\perp$ denotes the orthogonal projection of $K$ on $L_I^\perp$. Schymura generalized Davenport's inequality and obtained for an arbitrary linearly independent set $\{b_1,...,b_n\}\subseteq\Z^n$ that 
\begin{equation}
\label{eq:schymura}
\LE(K) \leq \sum_{k=1}^n\,\sum_{I\in {[n]\choose k}} \vol_{n-k}(K|L_I^\perp)\vol_k(P_I),
\end{equation}
where $L_I=\spann\{b_i:i\in I\}$ and $P_I = \sum_{i\in I} [0,b_i]$ \cite[Lemma 1.1]{schymura}. We reverse \eqref{eq:schymura} in the following way.

\begin{theorem}
\label{thm:revdavi}
Let $K\in\K^n$ and let $b_1,...,b_n\in\Z^n$ be linearly independent. Then
\begin{equation}
\label{eq:revdavi}
\vol(K)\leq \sum_{I\subseteq [n]} \LE_{\Z^n|L_I^\perp}(\inter K | L_I^\perp)
\end{equation}
holds, where $L_I = \spann\{b_i:i\in I\}$ and $\LE_{\Z^n|L_I^\perp}$ denotes the lattice point enumerator with respect to the projected lattice $\Z^n|L_I^\perp$.  The inequality is tight.
\end{theorem}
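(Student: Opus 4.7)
The plan is to induct on $n$. The base case $n=1$ is the classical fact that a segment $[a,b]\subseteq\R$ satisfies $b-a \leq |(a,b)\cap\Z|+1$; this matches the right-hand side, since the $I=\emptyset$ term contributes $\LE_\Z(\inter K)$ and the $I=\{1\}$ term contributes the constant $1$, while the particular choice of $b_1\in\Z\setminus\{0\}$ does not affect the bound.

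For the inductive step, I would slice $K$ into one-dimensional fibers parallel to $b_n$. Let $\pi\colon\R^n\to b_n^\perp$ denote the orthogonal projection and let $\tilde b_n$ be the primitive generator of $\Z^n\cap\R b_n$. By Fubini,
\[
\vol(K) \;=\; \int_{b_n^\perp}\vol_1\bigl(K\cap\pi^{-1}(y)\bigr)\,dy,
\]
and applying the one-dimensional bound to each fiber, relative to the spacing-$|\tilde b_n|$ translated lattice $y+\Z\tilde b_n$, yields
\[
\vol_1\bigl(K\cap\pi^{-1}(y)\bigr) \;\leq\; |\tilde b_n|\bigl(N(y)+1\bigr),
\]
with $N(y)=\#\{k\in\Z : y+k\tilde b_n\in\inter K\}$. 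Integrating over $y$ and swapping the order of summation and integration decomposes the estimate into
\[
|\tilde b_n|\sum_{k\in\Z}\vol_{n-1}\bigl(\inter K\cap H_k\bigr) \;+\; |\tilde b_n|\,\vol_{n-1}\bigl(K\,\vert\, b_n^\perp\bigr),
\]
where $H_k = b_n^\perp + k\tilde b_n$ are the lattice-indexed affine hyperplanes.

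The inductive hypothesis is then applied in two places: to each hyperplane slice $\inter K\cap H_k$, viewed as an $(n-1)$-dimensional convex body in $H_k$ equipped with the induced lattice and a basis adapted to $b_1,\ldots,b_{n-1}$, producing the $\LE$-contributions for index sets $I\subseteq[n]$ with $n\in I$; and to the projection $K\,\vert\, b_n^\perp$ inside the ambient space $b_n^\perp$, producing the contributions for $I\subseteq[n-1]$. After reindexing, the two families combine to give $\sum_{I\subseteq[n]}\LE_{\Z^n|L_I^\perp}(\inter K\,\vert\, L_I^\perp)$. Tightness will be witnessed by lattice parallelepipeds whose edges lie along the $b_i$-directions; already in the standard case $b_i=e_i$, the integer boxes $[0,m]^n$ saturate each step.

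The principal technical obstacle is the lattice bookkeeping when $\{b_i\}$ is an arbitrary integer basis. The vectors $b_1,\ldots,b_{n-1}$ are not generally contained in $b_n^\perp$, and the slice lattice $\Z^n\cap H_k$ is a coset of $\Z^n\cap b_n^\perp$ sitting inside the projected lattice $\Z^n\,\vert\, b_n^\perp$ with index $|\tilde b_n|^2$. Ensuring that the prefactor $|\tilde b_n|$ arising from the one-dimensional bound cancels correctly against the covolume of the induced lattice on each $(n-1)$-dimensional subspace—so that the inductive hypothesis can be applied intrinsically on each slice and on the projection, and so that the orthogonal projection $\inter K\,\vert\, L_I^\perp$ appearing in the final answer emerges in place of the basis-adapted coordinate projection produced by the recursion—is where the technical weight of the argument lies.
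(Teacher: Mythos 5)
Your proposal takes a genuinely different route from the paper, and the step you defer to the end is, in my view, not a technicality but the point where the argument breaks. The paper never converts volume into lattice points fiber by fiber. It first proves a purely combinatorial translation lemma (Claim 3) by induction on $|\supp_B(t)|$: for any lattice $\Lambda$, any bounded convex $K$ and any $t$ one has $\LE_\Lambda(K+t)\leq\sum_{I\subseteq\supp_B(t)}\LE_{\Lambda|L_I^\perp}(K|L_I^\perp)$, the one--dimensional input being only that translating an interval changes its lattice point count by at most $1$; the conversion from $\vol(K)$ to a lattice point count happens exactly once, at the very end, via van der Corput's Theorem \ref{t:vdc} applied to $\inter K$. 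Because the lemma involves no volumes, all covolume bookkeeping is avoided.

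In your scheme the bookkeeping genuinely fails for non-unimodular directions. The hyperplanes $H_k=b_n^\perp+k\tilde b_n$ have spacing $|\tilde b_n|$, whereas the lattice hyperplanes of $\Z^n$ parallel to $b_n^\perp$ have spacing $1/|\tilde b_n|$, and the induced lattice $\Z^n\cap H_k$ has covolume $|\tilde b_n|$ inside $H_k$; applying the (necessarily covolume-weighted) inductive hypothesis on each slice therefore introduces a factor $|\tilde b_n|^2$ that does not cancel against the single prefactor $|\tilde b_n|$ from the fiber estimate. Concretely, take $n=2$, $b_1=e_1$, $b_2=(1,1)$ and $K$ a thin box around the segment from $(0,0)$ to $(10,-10)$. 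Only the slice $H_0=\{x_1+x_2=0\}$ meets $\inter K$, and your assembled slice contribution is $|\tilde b_2|^2(\LE(M_0)+1)=2\cdot 12=24$, while the corresponding target terms are $\LE(\inter K)+\LE_{\Z^2|b_1^\perp}(\inter K|b_1^\perp)=11+11=22$; the chain of inequalities thus terminates strictly above the claimed right-hand side, so the theorem is not proved. Even after repairing the spacing by slicing along the true lattice hyperplanes, a second obstruction remains: for $\emptyset\neq J\subseteq[n-1]$ the inductive hypothesis yields a sum over slices of lattice point counts of projections of individual slices, and these overlapping counts must be reassembled into the single term $\LE_{\Z^n|L_J^\perp}(\inter K|L_J^\perp)$, which requires $\Z^n|L_J^\perp$ to split compatibly with the slicing direction --- false for general integer bases. (A minor point: your assignment of terms is reversed; the slices produce the sets $I$ with $n\notin I$ and the projection onto $b_n^\perp$ produces those with $n\in I$.) Your argument does work when $b_i=e_i$ --- it is then essentially the reversal of Davenport's prism decomposition that the paper records after Theorem \ref{thm:dimn} --- but the statement concerns arbitrary linearly independent $b_1,\dots,b_n\in\Z^n$. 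I would replace the Fubini step by van der Corput and prove the translation inequality instead.
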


The factor $\vol_k(P_I)$ in \eqref{eq:schymura} is hidden in the correspondingly higher density of $\Z^n|L_I^\perp$. In fact, one has $\det(\Z^n|L_I^\perp)\geq\vol_k(P_I)^{-1}$.

Apart from yielding discrete versions of Minkowski's Second Theorem, Conjecture \ref{conj:bhw2} is interesting in its own right; on the one hand, one can deduce the well-known formula 
\begin{equation}
\label{eq:vollimit}
\lim_{r\to\infty} \frac{\vol(rK)}{\LE(rK)} =1
\end{equation}
from it, since $\sukz_i(rK)$ tends to $0$ as $r\to\infty$. On the other hand, if $K$ contains an $n$-dimensional set of lattice points it follows that $\sukz_i(K)\leq 2$ holds, and, if $K=-K$, one has $\sukz_i(K)\leq 1$, $1\leq i \leq n$. Therefore, we retrieve the universal bounds
\begin{equation}
\label{eq:blichfeldt}
\LE(K) \leq (n+1)!\,\vol(K),
\end{equation}
for $K$ with $\dim(K\cap\Z^n) =n$, and
\begin{equation}
\label{eq:vandercorput}
\LE(\inter K) \geq 2^{-n}\,\vol(K),
\end{equation}
for $K=-K$ with $\dim(K\cap\Z^n) =n$ from Conjecture \ref{conj:bhw2}. These bounds essentially correspond to classical results of Blichfeldt \cite{blichfeldt} and van der Corput \cite[Ch.2, Theorem 7.1]{geometryofnumbers}. 

In fact, all inequalities in Conjecture \ref{conj:bhw2} have equality cases that are invariant with respect to integer scaling; \eqref{eq:conjupperbound} is tight, e.g., for integer multiples of the standard simplex $T_n=\conv\{0,e_1,...,e_n\}$, since $\sukz_i(T_n) =2$ and thus,
\begin{equation}
\label{eq:ehrhart}
\vol(mT_n)\prod_{i=1}^n\Big(1+\frac{i\sukz_i(mT_n)}{2}\Big) = \frac{1}{n!}\prod_{i=1}^n(m+i),
\end{equation}
where the right hand side is exactly the Ehrhart polynomial of $T_n$ \cite[Theorem 2.2 (a)]{beck}. In view of \cite[Theorem 2.2 (b)]{beck}, we have 
\begin{equation}
    \LE\big( \inter(mT_n)\big) = \frac{1}{n!}\prod_{i=1}^n (m-i) =\vol(mT_n)\prod_{i=1}^n\Big(1-\frac{i\sukz_i(mT_n)}{2}\Big)
\end{equation}
and so \eqref{eq:conjlowerbound} is tight for integer multiples of $T_n$ as well. As it has been mentioned already in \cite{bhw}, equality cases for \eqref{eq:conjlowerboundsym} are given for example by boxes parallel to the coordinate axes with integral side lengths.

In dimension 2, we can confirm the upper bound in Conjecture \ref{conj:bhw2}. For the non-symmetric lower bound we obtain an asymptotic confirmation:

\begin{theorem}
\label{thm:dim2}
Let $K\in\K^2$ and $\sukz_i=\sukz_i(K)$. Then we have 
\begin{equation}
    \label{eq:dim2upper}
    \LE(K) \leq \vol(K) \left(1+\frac{\sukz_1}{2}\right)\left(1+\sukz_2\right)
\end{equation}
and
\begin{equation}
    \label{eq:dim2lower}
    \LE(\inter K) \geq \vol(K)\left(1-\frac{\sukz_1}{2}-\sukz_2\right).
\end{equation}
\end{theorem}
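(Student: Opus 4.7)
The strategy is to apply Schymura's inequality \eqref{eq:schymura} (implicitly including the $k=0$ term $\vol(K)$) for the upper bound \eqref{eq:dim2upper}, and Theorem \ref{thm:revdavi} for the lower bound \eqref{eq:dim2lower}. In both cases I would choose a basis $b_1, b_2$ of $\Z^2$ realizing the successive minima $\sukz_1, \sukz_2$ of $K$---such a basis always exists in dimension $2$---and, after a unimodular change of coordinates, assume $b_1 = e_1$ and $b_2 = e_2$. Then $e_i \in \sukz_i \cs(K)$, so the width $h_i$ of $K$ in direction $e_i$ satisfies $h_i \geq 2/\sukz_i$.

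With the standard basis, Schymura's inequality reads $\LE(K) \leq \vol(K) + h_1 + h_2 + 1$. This is however not sharp on simplices $mT_2$, for which \eqref{eq:dim2upper} is known to be an equality while the Schymura bound overshoots by a linear term in $m$. To close this gap I would invoke the Blaschke shaking procedure mentioned in the abstract: iteratively shaking $K$ along the coordinate hyperplanes produces an anti-blocking body $K' \subseteq \R^2_{\geq 0}$ of the same volume, whose northeast boundary is the graph of a concave decreasing function $g$ on $[0,a]$ with $g(a) = 0$. For such $K'$, both quantities in \eqref{eq:dim2upper} admit explicit expressions
\begin{equation*}
\LE(K') = \sum_{i=0}^{\lfloor a\rfloor}(\lfloor g(i)\rfloor + 1), \qquad \vol(K') = \int_0^a g(x)\,dx,
\end{equation*}
and the anti-blocking versions of the inclusions $e_i \in \sukz_i \cs(K')$ yield $a \geq 2/\sukz_1$ and $g(0) \geq 2/\sukz_2$. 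The factorized upper bound can then be established by comparing $\sum_i g(i)$ with $\int g$ via a Hermite--Hadamard-type concavity estimate, contributing the factor $(1 + \sukz_1/2)$, and bounding the number of occupied columns $\lfloor a\rfloor + 1$, contributing the factor $(1 + \sukz_2)$, while absorbing the residual constant through Minkowski's Second Theorem in the form $\vol(K)\sukz_1\sukz_2 \geq 2$.

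The lower bound \eqref{eq:dim2lower} is handled analogously: after the same shaking reduction I apply Theorem \ref{thm:revdavi} to $K'$, obtaining
\begin{equation*}
\vol(K') \leq \LE(\inter K') + \LE_{\Z}(\inter K'|e_1^\perp) + \LE_{\Z}(\inter K'|e_2^\perp) + 1,
\end{equation*}
and bound the one-dimensional projected lattice-point counts via the one-dimensional versions of \eqref{eq:dim2lower}, again using Minkowski to absorb the residual constant. The main obstacle is to justify the Blaschke shaking reduction in a way compatible with both bounds: while shaking classically preserves $\vol$ and interacts monotonically with $\LE(K)$ and $\LE(\inter K)$, its effect on the successive minima is more subtle, because $\sukz_i(K) = \sukz_i(\cs(K))$ is defined through the central symmetrization and $\cs(K)$ changes nontrivially under shaking. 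Verifying that $\sukz_i(K')$ is controlled by $\sukz_i(K)$ in the direction required for each bound is the crux of the argument, and would likely require tracking the lattice vectors realizing the minima through each individual shaking.
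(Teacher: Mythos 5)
Your skeleton for the upper bound---shake $K$ to an anti-blocking body, slice into lattice lines, compare the resulting sum with $\vol(K)$ via a Hermite--Hadamard estimate for the concave profile function, and absorb the additive remainder through Minkowski's lower bound $\vol(K)\geq 2/(\sukz_1\sukz_2)$---is exactly the paper's strategy, and the issue you flag as the crux is not actually the obstacle: Theorem \ref{prop:anti-blocking} already establishes $\sukz_i(A)\leq\sukz_i(K)$ for the shaken body by tracking a triangular system of lattice vectors realizing the minima through each individual shaking (Claim \ref{claim:1}), and this monotonicity is the direction needed for \emph{both} bounds. The genuine gap is that your accounting does not close. With $M=\lfloor 2/\sukz_2\rfloor$ and $f$ the concave width function of the anti-blocking body (so $f(0)=2/\sukz_1$ by Lemma \ref{lemma:succminantiblocking}), the slicing plus the chord estimate of Lemma \ref{lemma:concavefunc} gives $\LE(K')\leq\vol(K')+\tfrac12 f(0)+\tfrac12 f(M)+M+1$, while the entire budget supplied by Minkowski for the right-hand side of \eqref{eq:dim2upper} is $2/\sukz_1+1/\sukz_2+1$. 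The boundary term $\tfrac12 f(M)$ can be as large as $1/\sukz_1$ (boxes), and then the total overshoots; the slack that saves you in that case sits in $\vol(K')$, not in the minima, so Minkowski alone cannot recover it. The paper's missing ingredient is a \emph{second} shaking stage (Lemma \ref{lemma:A}): after making $K$ anti-blocking, one shakes it against a lattice diagonal $D=\{x_1+x_2=m\}$ and back, obtaining a body contained in $\{x\in\R^2:x_1+x_2\leq 2/\sukz_1\}$ with all other parameters controlled. This containment gives $f(t)\leq 2/\sukz_1-t$, which converts $+\tfrac12 f(M)$ into $\tfrac1{\sukz_1}-\tfrac M2$ and makes the constants match exactly. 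You need this step, or an explicit trade-off between $f(M)$ and $\vol(K)$ replacing it.

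Your route to the lower bound \eqref{eq:dim2lower} via Theorem \ref{thm:revdavi} has a similar but larger deficit. That theorem gives $\vol(K')\leq\LE(\inter K')+\LE_{\Z|e_1^\perp}(\inter K'|e_1^\perp)+\LE_{\Z|e_2^\perp}(\inter K'|e_2^\perp)+1$, and the projected counts must be bounded from \emph{above} (a one-dimensional version of \eqref{eq:dim2lower} points the wrong way); the best available bounds are $2/\sukz_2$ and $2/\sukz_1$. This yields $\LE(\inter K)\geq\vol(K)-2/\sukz_1-2/\sukz_2-1$, whereas \eqref{eq:dim2lower} combined with Minkowski demands $\LE(\inter K)\geq\vol(K)-2/\sukz_1-1/\sukz_2$; for bodies attaining the Minkowski lower bound there is no slack to absorb the missing $1/\sukz_2+1$, so the proposed chain proves only a strictly weaker inequality. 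The paper instead proves the lower bound by the same slicing method as the upper bound, now using the tangent-line half of Lemma \ref{lemma:concavefunc} (which introduces $f'$ and a telescoping derivative sum) together with the diagonal containment; Theorem \ref{thm:revdavi} is not used for Theorem \ref{thm:dim2} at all.
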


In particular, for any $\varepsilon>0$, if $\sukz_1\leq\frac{2\varepsilon}{1+\varepsilon}$ it follows from \eqref{eq:dim2lower} that
\begin{equation}
    \label{eq:dim2lowereps}
 \LE(\inter K) \geq \vol(K)\left(1-\frac{\sukz_1}{2}\right)\left(1-(1+\varepsilon)\sukz_2\right).
\end{equation}




Moreover, \eqref{eq:conjlowerboundsym} can be confirmed for the special class of origin-symmetric lattice polygons.

\begin{proposition}
\label{prop:polygons}
Let $P\in\K^2$ be an origin-symmetric lattice polygon, i.e., we have $-P=P$ and $P$ is the convex hull of finitely many integer points. Then we have 
\begin{equation}
    \LE(\inter P) \geq \vol(P) \left(1-\frac{\sukz_1}{2}\right)\left(1-\frac{\sukz_2}{2}\right),
\end{equation}
where $\sukz_i=\sukz_i(P)$.
\end{proposition}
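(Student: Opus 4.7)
The plan is to convert the claim into a bound on the number of boundary lattice points via Pick's theorem, and then to reduce that bound to a cleaner inequality with the help of Minkowski's Second Theorem. Writing $B(P)=|\bd P\cap\Z^2|$ for the number of lattice points on the boundary of $P$, Pick's formula gives
\[
\vol(P)=\LE(\inter P)+\frac{B(P)}{2}-1.
\]
Substituting this into the target inequality and rearranging, the proposition becomes equivalent to
\[
B(P)-2\leq \vol(P)\Bigl(\sukz_1+\sukz_2-\frac{\sukz_1\sukz_2}{2}\Bigr).
\]
Since $\vol(P)\sukz_1\sukz_2\leq 4$ by the upper bound in Minkowski's Second Theorem \eqref{eq:minkowski2ndtheorem}, the term $2-\vol(P)\sukz_1\sukz_2/2$ is non-negative, and it therefore suffices to establish the cleaner inequality
\[
B(P)\leq \vol(P)\,(\sukz_1+\sukz_2).
\]

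To prove this, I would first apply a unimodular transformation so that the successive minima of $P$ are attained by the standard basis $e_1,e_2$; this is possible for every origin-symmetric convex body in dimension two by a short argument showing that a lattice vector of minimal $P$-norm linearly independent from a primitive $b_1$ must necessarily project to a generator of $\Z^2/\Z b_1$ (otherwise rescaling yields a strictly smaller-norm lattice vector). After this normalization, $\pm e_i/\sukz_i\in P$ and in particular $P\supseteq\conv\{\pm e_1/\sukz_1,\pm e_2/\sukz_2\}$. Writing the vertices of $P$ in counterclockwise order as $v_1,\dots,v_{2m}$ and decomposing $P$ via the fan of triangles $T_i=\conv\{\vnull,v_i,v_{i+1}\}$, one has
\[
B(P)=\sum_{i=1}^{2m}\gcd(v_{i+1}-v_i)\quad\text{and}\quad\vol(P)=\sum_{i=1}^{2m}\vol(T_i).
\]
The crude estimate $\gcd(v_{i+1}-v_i)\leq 2\vol(T_i)$ (which reflects that the lattice distance from $\vnull$ to any edge line of $P$ is a positive integer) gives only the universal bound $B(P)\leq 2\vol(P)$; the refinement we need exploits the fact that the extent of $P$ in each coordinate direction is at least $1/\sukz_i$ thanks to the normalization.

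The main obstacle lies precisely in this refinement step. As shown by axis-parallel rectangles $[-m_1,m_1]\times[-m_2,m_2]$ with $m_1\neq m_2$, no per-edge inequality of the form $\gcd(v_{i+1}-v_i)\leq (\sukz_1+\sukz_2)\,\vol(T_i)$ can hold, since the long horizontal edges in a thin rectangle are disproportionately long compared to their triangular areas. The argument must therefore balance the contributions of edges in different directions globally; a natural strategy is to pair opposite edges via the central symmetry of $P$ and to bound the sum of their lattice lengths by twice the width of $P$ in the corresponding coordinate direction, which is at least $2/\sukz_i$ by the normalization. Axis-aligned lattice boxes, where equality holds in every intermediate estimate, guide how this balance must work out to produce exactly the constant $\sukz_1+\sukz_2$.
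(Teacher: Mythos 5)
Your reduction is exactly the one the paper uses: Pick's theorem converts the claim into a bound on the boundary lattice points, and the upper bound of Minkowski's Second Theorem \eqref{eq:minkowski2ndtheorem} absorbs the cross term $\vol(P)\sukz_1\sukz_2/4$ against the constant $1$, leaving precisely the inequality $\LE(\bd P)\leq\vol(P)(\sukz_1+\sukz_2)$ to be shown. That inequality, however, is not an easy lemma you can leave as a sketch; it is the substantive content of the proposition. The paper does not prove it either --- it quotes it as Equation 1.6 of Henk, Sch\"urmann and Wills \cite{hsw}, a known bound on the second Ehrhart coefficient of an origin-symmetric lattice polytope in terms of the successive minima. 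Everything up to that point in your write-up is correct and matches the published argument step for step.

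The gap is that you explicitly stop at ``the main obstacle lies precisely in this refinement step'' and only gesture at a strategy (pairing opposite edges by central symmetry and comparing their lattice lengths to the widths $2/\sukz_i$). As you yourself observe, no per-edge estimate of the form $\gcd(v_{i+1}-v_i)\leq(\sukz_1+\sukz_2)\vol(T_i)$ can hold, and the crude fan decomposition only yields $\LE(\bd P)\leq 2\vol(P)$, which is weaker than what is needed since $\sukz_1+\sukz_2\leq 2$ for a two-dimensional symmetric lattice polygon. The proposed global pairing is not carried out, and it is not clear how bounding the total lattice length of a pair of opposite edges by a coordinate width would interact with the areas of the corresponding triangles to recover the constant $\sukz_1+\sukz_2$. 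So as written the proof is incomplete: either supply a full proof of $\LE(\bd P)\leq\vol(P)(\sukz_1+\sukz_2)$ or cite it, as the paper does, from \cite{hsw}.
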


In dimension $n$, we also obtain the following bounds in terms of the covering radius $\mu(K)$ of $K\in\K^n$, i.e., the smallest number $\mu>0$ such that $\mu K+ \Z^n = \R^n$.
 
\begin{proposition}
\label{prop:covrad}
Let $K\in\K^n$ and $\mu = \mu(K)$. Then we have
\begin{equation}
\label{eq:covradup}
\LE(K) \leq\vol(K)\big(1+\mu\big)^n.
\end{equation}
If $\mu \leq 1$, i.e., $K+\Z^n =\R^n$, we also have
\begin{equation}
\label{eq:covraddown}
\LE(\inter K)\geq \vol(K)\big(1-\mu\big)^n.
\end{equation}
Both inequalities are tight.
\end{proposition}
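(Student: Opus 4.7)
My plan is to reduce both inequalities to a volume comparison based on a measurable fundamental domain $F$ of $\Z^n$ sitting inside $\mu K$; once such an $F$ is constructed, a tiling argument combined with the convex identity $K + \mu K = (1+\mu)K$ delivers both bounds.

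First I would construct $F$. Since $\mu K + \Z^n = \R^n$, every $x \in \R^n$ admits at least one $z \in \Z^n$ with $x - z \in \mu K$. Fix the lexicographic order on $\Z^n$ and let $z(x)$ be the smallest such $z$. Setting $F := \{x \in \R^n : z(x) = \vnull\}$ yields a measurable set contained in $\mu K$; the translation invariance $z(x+w) = z(x) + w$ for $w \in \Z^n$ ensures that the translates $\{w+F\}_{w \in \Z^n}$ tile $\R^n$. Hence $F$ is a fundamental domain with $\vol(F) = 1$, and for every $y \in \R^n$ the set $(y+F)\cap\Z^n$ is a single lattice point.

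For the upper bound, the translates $\{z + F : z \in K \cap \Z^n\}$ are pairwise disjoint and satisfy $z + F \subseteq K + \mu K = (1+\mu)K$ by the convex-combination identity, whence
\[
\LE(K) = \vol\!\Bigl(\bigsqcup_{z \in K \cap \Z^n}(z+F)\Bigr) \leq (1+\mu)^n \vol(K).
\]
For the lower bound with $\mu \leq 1$, I would take $x \in \inter((1-\mu)K)$; then $x + F \subseteq (1-\mu)K + \mu K = K$, and openness at $x$ upgrades this to $x + F \subseteq \inter K$, so the unique lattice point of $(x + F) \cap \Z^n$ lies in $\inter K \cap \Z^n$. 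The fibers of the resulting map $x \mapsto \zeta(x)$ lie in translates of $-F$, which have volume $1$, giving
\[
(1-\mu)^n \vol(K) = \vol(\inter((1-\mu)K)) \leq \LE(\inter K).
\]
Tightness in both bounds would be checked on integer dilates of the unit cube $K = r[0,1]^n$, $r\in\posint$, for which $\mu(K) = 1/r$, $\vol(K) = r^n$, $\LE(K) = (r+1)^n$, and $\LE(\inter K) = (r-1)^n$.

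The main technical obstacle is the construction of the fundamental domain inside $\mu K$: a lexicographic measurable selection does the job, but one has to verify measurability and $\Z^n$-equivariance carefully so that $\vol(F)=1$ and the uniqueness of $(y+F)\cap\Z^n$ are genuinely available. After that, both inequalities emerge from the same volume-counting skeleton, with the lower bound additionally exploiting openness of $\inter((1-\mu)K)$ to ensure the selected lattice point lies in $\inter K$ rather than merely in $K$.
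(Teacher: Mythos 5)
Your proposal is correct, and it splits into a part that mirrors the paper and a part that is genuinely different. For the upper bound you do essentially what the paper does: both arguments construct a measurable fundamental domain of $\Z^n$ inside $\mu K$ (the paper by greedily partitioning the unit cube among finitely many translates $\mu K + x_i$ covering it, you by a lexicographic selection), and then both conclude via disjointness of the translates $z+F$, $z\in K\cap\Z^n$, together with the convexity identity $K+\mu K=(1+\mu)K$. Your lexicographic selection is fine — the candidate set $\{z\in\Z^n: x-z\in\mu K\}$ is finite and nonempty, $F=\mu K\setminus\bigcup_{z<_{\mathrm{lex}}0}(z+\mu K)$ is Borel, and equivariance of the selection gives the tiling — and it even yields honestly disjoint (not just interiorly disjoint) translates, which slightly cleans up the volume count. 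For the lower bound, however, you diverge from the paper: the paper invokes van der Corput's theorem (Theorem \ref{t:vdc}) applied to $(1-\mu)\inter K$ and then uses $\mu K+\Z^n=\R^n$ to normalize the translation vector $t$ into $\mu K$, whereas you reuse the fundamental domain directly, mapping each $x\in\inter((1-\mu)K)$ to the unique lattice point of $x+F$ and bounding the volume of each fiber by $\vol(F)=1$. Your step $x+F\subseteq\inter K$ is justified since $\inter((1-\mu)K)+\mu K$ is open and contained in $(1-\mu)K+\mu K=K$. In effect you re-prove the needed instance of van der Corput's averaging argument, making the proof self-contained at the cost of a little extra bookkeeping; the paper's route is shorter because it outsources exactly this averaging to a quoted theorem. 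The tightness examples coincide (integer cubes).
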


The upper bound \eqref{eq:covradup} has also been shown independently by Dadush in \cite[Lemma 7.4.1]{dadush}.

The disadvantage of \eqref{eq:covradup} in comparison to the upper bound \eqref{eq:upperboundn} is that it cannot profit from $K$ being large in a lattice subspace. Consider the convex body $K=[-r,r]^{n-1}\times[-1/2,1/2]$, where $r$ is large. Then it holds that $\mu(K) = 1$, so the constant in \eqref{eq:covradup} is $2^n$. But the constant in \eqref{eq:upperboundn} is of order $n+1$, since $\lambda_i(K)$ tends to $0$ for $i<n$ as $r\to\infty$.

On the other hand, \eqref{eq:covraddown} is actually stronger than the lower bound \eqref{eq:lowerboundn} in Theorem \ref{thm:dimn}. We will use \eqref{eq:covraddown} to prove \eqref{eq:lowerboundn} in Section \ref{sec:dimn}.

Applied to the special class of convex lattice tiles, i.e., convex bodies $K$ with $K+\Z^n= \R^n$ and $\inter K \cap (z+\inter K) = \emptyset$, for all $z\in\Z^n\setminus\{0\}$, Proposition \ref{prop:covrad} yields for $r\geq 1$ that 
\begin{equation}
(r-1)^n \leq \LE(\inter (rK))\leq \LE(rK) \leq (r+1)^n,
\end{equation}
since $\vol(K)=\mu(K)=1$ and $\mu(rK) =\frac{\mu(K)}{r}$, which is sharp for $K=[0,1]^n$ and $r\in\Z_{>0}$.

The paper is organized as follows: In Section \ref{sec:pre} we introduce basic terms and facts from convex geometry and the geometry of numbers that are necessary for the proofs. In Section \ref{sec:anti-blocking} we consider the so-called anti-blocking convex bodies. These are convex bodies $K\subseteq\R^n_{\geq 0}$ such that for every $x=(x_1,\dots,x_n)\in K$ one has $\{x'\in\R^n_{\geq 0}:x_i'\leq x_i,\ \forall\;1\leq i\leq n\}\subseteq K$.  We will show that it is enough to prove Theorems \ref{thm:dimn} and \ref{thm:dim2} for anti-blocking bodies. Section \ref{sec:dimn} contains the proofs of Theorems \ref{thm:dimn} and \ref{thm:revdavi}, as well as Proposition \ref{prop:covrad}. In Section \ref{sec:dim2} we prove Theorem \ref{thm:dim2} and Proposition \ref{prop:polygons}.

\section{Preliminaries}
\label{sec:pre}

For a vector $u\in\R^n\setminus\{0\}$ we denote by $u^\perp$ the hyperplane orthogonal to $u$ passing through the origin and by $|u|$ its Euclidean length. For two vectors $u,v\in\R^n$, we let $\langle u,v\rangle$ be the standard scalar product. For a linear subspace $L\subseteq\R^n$ and a set $A\subseteq\R^n$ we denote by $A|L$ the image of the orthogonal projection of $A$ onto $L$. We write $\{x\}|L =x|L$. The Minkowski sum of two non-empty sets $A,B\subseteq\R^n$ is denoted by $A+B = \{a+b:a\in A, b\in B\}$. Also, for a number $\lambda\in\R$, we write $\lambda A = \{\lambda a: a\in A\}$ and $-A = (-1)A$. The convex hull of a set $A$ is denoted by $\conv A$. If $A=\{x,y\}$, we write $[x,y]=\conv A$. Moreover, for $n\in\N$, we write $[n]=\{1,...,n\}$ and ${[n]\choose k}=\{I\subseteq [n]: |I|=k\}$. 

The class of all $n$-dimensional convex bodies, i.e., compact and convex sets, is denoted by $\K^n$. For $K\in\K^n$, we denote its support function by $\suk(K,\cdot)$, which is defined for $x\in\R^n$ as follows:
\begin{equation}
\suk(K,x) = \max\{\langle x, y\rangle: y\in K\}.
\end{equation}
If $K$ satisfies $-K=K$, its gauge function $|\cdot|_K$ is defined as 
\begin{equation}
|x|_K = \min\{r>0:x\in rK\},
\end{equation} 
where $x\in \R^n$. As an alternative to \eqref{eq:lambdai}, one can use the gauge function of $\cs(K)$ to define the successive minima. Namely, one has $\sukz_1(K) = \min \norm{z}{K}$, where $z$ ranges over $\Z^n\setminus\{0\}$, and $\sukz_i(K) = \min\norm{z}{K}$, where $z$ ranges over $\Z^n\setminus\spann\big(\sukz_{i-1}(K)\cs(K)\cap\Z^n\big)$.

A lattice $\Lambda\subseteq\R^n$ is the integral span of linearly independent vectors $x_1,...,x_k\in\R^n$, i.e., 
\begin{equation}
\Lambda = \Big\{\sum_{i=1}^k m_ix_i: m_1,...,m_k\in\Z\Big\}.
\end{equation}
We will use that for any $X\subseteq\Lambda$, the set $\Lambda|\spann(X)^\perp$ is also a lattice. For a set $A\subseteq\R^n$ and a lattice $\Lambda\subseteq\R^n$, we consider the lattice point enumerator $\LE_\Lambda(A) = |A\cap\Lambda|$. In most cases, we will be concerned with $\Lambda = \Z^n$ and we write $\LE(A) = \LE_{\Z^n}(A)$. We will also need the following result which is due to van der Corput \cite[Ch.2, Theorem 6.1]{geometryofnumbers}.
\begin{theorem}[van der Corput]\label{t:vdc}
Let $M\subseteq\R^n$ be a Jordan-measurable set. Then there exists a vector $t\in\R^n$ such that
\begin{equation}
\label{eq:vdc}
\vol(M) \leq \LE(M+t).
\end{equation}
\end{theorem}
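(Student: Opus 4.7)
The plan is to show that the identity $\int_{[0,1)^n}\LE(M+t)\,dt = \vol(M)$ holds; then by a mean-value argument, there must exist some $t\in[0,1)^n$ at which $\LE(M+t)\ge\vol(M)$.

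To establish this identity, I would first rewrite the lattice point enumerator as a sum of characteristic functions,
\begin{equation*}
\LE(M+t) = |\{z\in\Z^n : z-t\in M\}| = \sum_{z\in\Z^n}\mathbf{1}_M(z-t),
\end{equation*}
where $\mathbf{1}_M$ denotes the characteristic function of $M$. Since $M$ is Jordan-measurable and in particular bounded, only finitely many summands are nonzero for each fixed $t$. Integrating over $t\in[0,1)^n$, interchanging the finite sum and the integral, and applying the substitution $u=z-t$ in each summand, I obtain
\begin{equation*}
\int_{[0,1)^n}\LE(M+t)\,dt \;=\; \sum_{z\in\Z^n}\int_{z+(-1,0]^n}\mathbf{1}_M(u)\,du \;=\; \vol(M),
\end{equation*}
where the last equality uses that the half-open translates $\{z+(-1,0]^n : z\in\Z^n\}$ tile $\R^n$.

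Finally, since the average of $\LE(M+\cdot)$ over the unit cube $[0,1)^n$ of volume $1$ equals $\vol(M)$, the function $t\mapsto\LE(M+t)$ must attain a value at least $\vol(M)$ somewhere on $[0,1)^n$, which gives the desired vector $t$. The only subtlety is making sure that the integral manipulations are justified, but Jordan-measurability of $M$ ensures that $\mathbf{1}_M$ is Riemann-integrable, and the finiteness of the sum for bounded $M$ makes the interchange of sum and integral entirely routine.
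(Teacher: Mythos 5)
Your proof is correct. Note that the paper does not prove this statement itself: it quotes it as a classical result of van der Corput with a citation to Gruber--Lekkerkerker, so there is no internal proof to compare against. The averaging identity $\int_{[0,1)^n}\LE(M+t)\,\mathrm{d}t=\vol(M)$ that you establish is exactly the standard argument for this theorem, and your handling of the interchange of sum and integral (finitely many nonzero terms, uniformly in $t$, since $M$ is bounded) is fine. One small point worth making explicit at the end: the passage from ``the average equals $\vol(M)$'' to ``some value is at least $\vol(M)$'' deserves a word, since for a general function the supremum can exceed every attained value; here it is harmless either because $t\mapsto\LE(M+t)$ is integer-valued (so $\LE(M+t)<\vol(M)$ for all $t$ would force $\LE(M+t)\le\lceil\vol(M)\rceil-1$ everywhere, making the integral strictly smaller than $\vol(M)$), or by the usual measure-theoretic argument that a measurable function strictly below $c$ on a set of positive measure cannot average to $c$ there.
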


\section{Anti-blocking convex bodies}
\label{sec:anti-blocking}

A convex body $K\subseteq\R^n_{\geq 0}$ is anti-blocking if for every $x=(x_1,\dots,x_n)\in K$ the set $\{x'\in\R^n_{\geq 0}:x_i'\leq x_i,\,\forall i\in [n]\}$ is also contained in $K$. Given the convexity of $K$, the latter condition is equivalent to $K\cap e_i^\perp = K|e_i^\perp$, for all $i\in [n]$. 

Anti-blocking bodies have been introduced in \cite{fulkerson}. Their volumes have been extensively studied in \cite{artstein}. In the discrete setting, the set of lattice points $K\cap\Z^n$ inside of an anti-blocking body $K$ is called a compressed set. Compressed sets have been considered in \cite{greentao} (in the context of sum-set estimates) and in \cite{veomett} (in the context of discrete isoperimetric inequalities).

The goal of this section is to prove the following statement:

\begin{theorem}
\label{prop:anti-blocking}
For any convex body $K\in\K^n$, there exists an anti-blocking convex body $A\subseteq\R^n_{\geq 0}$ such that the following holds:
\begin{enumerate}
\item $\vol(K) = \vol(A)$,
\item $\LE(K) \leq \LE(A)$,
\item $\LE(\inter K) \geq \LE(\inter A)$ and
\item $\sukz_i(K) \geq \sukz_i(A)$, for all $i\in [n]$.
\end{enumerate}
\end{theorem}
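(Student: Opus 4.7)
The plan is to apply Blaschke's shaking procedure $\blaschke_i$ iteratively in each coordinate direction, setting $A := \blaschke_n \circ \cdots \circ \blaschke_1(K)$. Here $\blaschke_i(K)$ denotes the body obtained by replacing each segment $K \cap \ell$ (for $\ell$ a line parallel to $e_i$) by a translate of equal length whose minimum $x_i$-coordinate is zero. Classical results guarantee that $\blaschke_i$ preserves convexity and, by Cavalieri's principle, volume, and that $\blaschke_i(K) \subseteq \{x_i \ge 0\}$; iterating yields (i) together with $A \subseteq \R^n_{\ge 0}$. To verify that $A$ is anti-blocking, I would induct on the number of shakings: once $K' := \blaschke_i \circ \cdots \circ \blaschke_1(K)$ is anti-blocking in directions $e_1,\ldots,e_i$, the subsequent shaking $\blaschke_{i+1}$ preserves each such property, because the anti-blocking condition in $e_k$ makes the $e_{i+1}$-fiber length of $K'$ maximized at $x_k = 0$, so setting the $k$-th coordinate of any point of $\blaschke_{i+1}(K')$ to zero yields another point of $\blaschke_{i+1}(K')$.

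For (ii) and (iii), decompose each lattice point count along lines parallel to $e_i$ passing through $\Z^{n-1} \subset e_i^\perp$. On each such line, $K$ contributes an interval of some length $L$ which, under $\blaschke_i$, becomes $[0, L]$ along that line. The closed interval $[0, L]$ contains $\lfloor L \rfloor + 1$ integers, the maximum over all real translates of length $L$, whereas the open interval $(0, L)$ contains $\lceil L \rceil - 1$ integers, the minimum. Summing over lattice lines and iterating over $i = 1, \ldots, n$ proves $\LE(A) \ge \LE(K)$ and $\LE(\inter A) \le \LE(\inter K)$, giving (ii) and (iii).

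The main obstacle is (iv), where the goal is to show $\sukz_j(\blaschke_i(K)) \le \sukz_j(K)$ for every $i, j \in [n]$. Applying Cavalieri to the difference body, the $e_i$-fibers of $K - K$ and of $\blaschke_i(K) - \blaschke_i(K)$ over each point of $e_i^\perp$ have equal length, and by central symmetry of the difference body they coincide on the $e_i$-axis, both equalling $[-h_{\max}, h_{\max}]$ for $h_{\max}$ the maximum length of an $e_i$-fiber of $K$. Given linearly independent lattice witnesses $z_1, \ldots, z_j \in \sukz \cdot \cs(K) \cap \Z^n$ for $\sukz = \sukz_j(K)$, I plan to construct linearly independent $z_k' := z_k + m_k e_i \in \sukz \cdot \cs(\blaschke_i(K)) \cap \Z^n$ by choosing integer shifts $m_k$ so that each $z_k'$ falls in the shifted fiber. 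Since the projections $z_k' | e_i^\perp = z_k | e_i^\perp$ are unchanged, and at most one $z_k$ can be parallel to $e_i$ (in which case the fiber coincidence on the $e_i$-axis lets us take $z_k' = z_k$), the remaining freedom in the shifts suffices to enforce the nonzero-determinant condition for linear independence. The delicate technical point is establishing the existence of a valid integer shift $m_k$ in each case, which hinges on the common fiber length of $K - K$ and $\blaschke_i(K) - \blaschke_i(K)$ being compatible with the required $\Z$-translate; this forms the heart of the argument for (iv).
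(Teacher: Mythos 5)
Your construction of $A$ by iterated coordinate shakings, and your treatment of (i)--(iii) and of the anti-blocking property, coincide with the paper's argument (the paper's Lemma \ref{lemma:blaschke} ii), v), vi) and its Claim 2). The genuine problem is (iv), where your route differs from the paper's and has a gap at exactly the point you defer. First, the claimed ``Cavalieri'' identity for difference bodies is false: the fiber of $K-K$ over $y\in e_i^\perp$ is the union of the intervals $K_x-K_{x-y}$, whose \emph{positions} are coupled through the original fibers of $K$, whereas after shaking all fibers are anchored at $0$ and this coupling is destroyed. Writing $K_x=[a(x),b(x)]$ in the $e_i$-coordinate, the fiber of $K-K$ over $y$ has length $\max_x\bigl(b(x)-a(x-y)\bigr)+\max_x\bigl(b(x-y)-a(x)\bigr)$, while that of $\blaschke_{e_i}(K)-\blaschke_{e_i}(K)$ has length $\max_x f(x)+\max_x f(x-y)$ with $f=b-a$; these differ already for a planar body with $a(x)=\max(0,10(x-1))$, $b(x)=\min(10x,11)$ on $[0,2]$ and $y=1$ (lengths $11$ versus $20$). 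Second, and more seriously, even knowing how the fibers transform does not rescue the shift strategy: if two witnesses $z_k,z_l$ have the same projection onto $e_i^\perp$, both shifted witnesses must be integer points of a \emph{single} repositioned fiber, and an interval of length $\ell$ can contain as few as $\lceil\ell\rceil-1$ integers while its original placement contained $\lfloor\ell\rfloor+1$; so there may not be two admissible shifts at all, let alone shifts keeping the full system linearly independent. This is precisely ``the heart of the argument'' you leave open, and I do not see how to close it along these lines.

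The paper avoids the problem by never shifting the witnesses: it first applies a unimodular transformation so that the witness matrix $[v_1,\dots,v_n]$ is upper triangular, and then, at the $j$-th shaking, replaces each witness $u_i$ with $i\neq j+1$ by its orthogonal projection $u_i|e_{j+1}^\perp$ (which is again integral because the shaking directions are coordinate directions) while keeping $u_{j+1}$ unchanged. Lemma \ref{lemma:blaschke} iii) and iv) give $\norm{u_{j+1}}{\blaschke_{e_{j+1}}(K_j)}=\norm{u_{j+1}}{K_j}$ and $\norm{u_i|e_{j+1}^\perp}{\blaschke_{e_{j+1}}(K_j)}\leq\norm{u_i}{K_j}$, and the triangular structure guarantees that the projected system stays linearly independent (only zeros are created in the $(j+1)$-th row beyond the diagonal). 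If you want to salvage your write-up, replacing the integer-shift step by this projection step --- together with the Hermite-normal-form normalization, without which even the projections could become dependent --- is the missing idea.
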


This shows that it is enough to prove \eqref{eq:conjupperbound} and \eqref{eq:conjlowerbound} for the special class of anti-blocking bodies. 

An important tool for the proof of Theorem \ref{prop:anti-blocking} is the Blaschke shaking of a convex body $K\in\K^n$ with respect to an oriented hyperplane $u^\perp$, $u\neq 0$, which is defined as 
\begin{equation}
\blaschke_u(K) = \bigcup_{x\in K|u^\perp} \left[ x,\, x+\frac{f_{u,K}(x)}{|u|}\cdot u\right],
\end{equation}
where $f_{u,K}(x)$ denotes the length of the preimage of $x$ under the orthogonal projection $K\to u^\perp$ (cf.\ Figure \ref{fig:blaschke}). The Blaschke shaking has been introduced in \cite{blaschke}. This process, which bares resemblance to Steiner's symmetrization, belongs to a wider class of transformations known as ``shakings''. These processes have been explored, for instance, to obtain discrete isoperimetric inequalities by Kleitman \cite{kleitman}, and more recently by Bollobás and Leader \cite{bollobasleader}. Stability results, akin to that of Gross for Steiner's symmetrization, have been obtained by Biehl \cite{biehl}, Schöpf \cite{schopf}, and more recently, Campi, Colesanti and Gronchi \cite{ccg}, for example. Other applications were obtained in \cite{uhrin} and \cite{ccg2}.

\begin{figure}[htb]
\centering
\includegraphics[width = .75\textwidth]{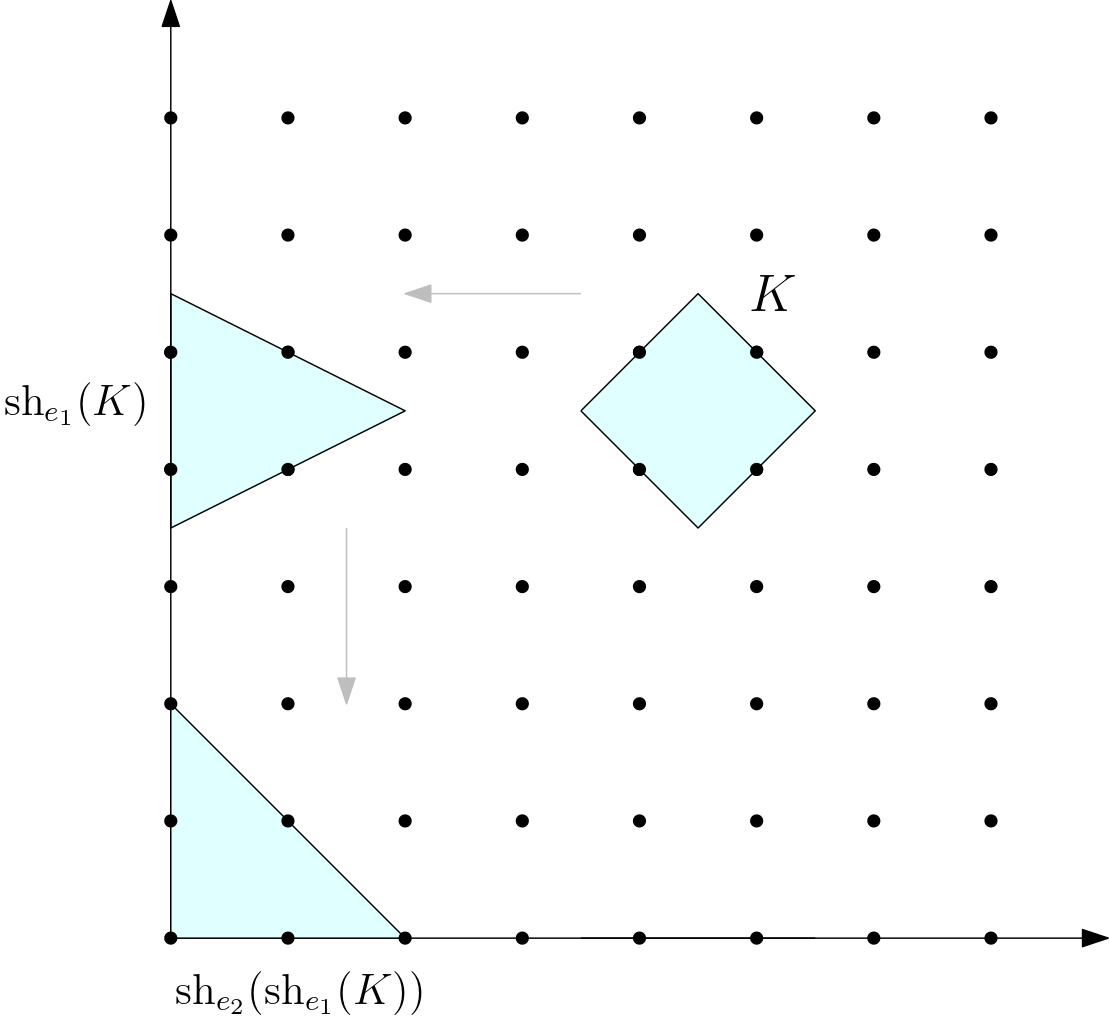}
\caption{Illustration of two consecutive Blaschke shakings. The body $\blaschke_{e_2}(\blaschke_{e_1}(K))$ is anti-blocking.}
\label{fig:blaschke}
\end{figure}

The operator $\blaschke_u$ is known to preserve convexity \cite[Lemma 1.1]{ccg} and we have the following lemma:

\begin{lemma}
\label{lemma:blaschke}
Let $K\in\K^n$ and $u\in\R^n\setminus\{0\}$. For the Blaschke shaking $\blaschke_u(K)$, the following relations hold:
\begin{enumerate}
\item $K|u^\perp\subseteq \blaschke_u(K)$
\item $\vol(K)=\vol(\blaschke_u(K))$,
\item $\norm{u}{K} = \norm{u}{\blaschke_u(K)}$,
\item $\norm{x}{K} \geq \norm{x|u^\perp}{\blaschke_u(K)}$, for all $x\in\R^n$.
\end{enumerate}
If $u=e_i$, for some $i\in [n]$, we also have
\begin{enumerate}
\setcounter{enumi}{4}
\item $\LE(K) \leq \LE(\blaschke_{e_i}(K))$,
\item $\LE(\inter K) \geq \LE(\inter(\blaschke_{e_i}(K))$.
\end{enumerate}

\end{lemma}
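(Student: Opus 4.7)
My plan is to handle the six items in three groups, corresponding to the features of the shaking operator each relies on.

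Items (i) and (ii) follow almost directly from the construction. For (i), every $x \in K|u^\perp$ is the initial endpoint of the segment $[x, x + \frac{f_{u,K}(x)}{|u|}u]$ appearing in the union defining $\blaschke_u(K)$. For (ii), I would apply Cavalieri's principle along lines parallel to $u$: by construction $\blaschke_u(K)$ has the same projection $K|u^\perp$ as $K$ and the same fiber length $f_{u,K}(y)$ over every $y \in K|u^\perp$, so the two volumes agree.

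For (iii) and (iv), the key move is to rephrase the gauge condition: since $\cs(K) = \tfrac{1}{2}(K-K)$, one has $\norm{x}{K} \le r$ if and only if there is a segment $[b, a] \subseteq rK$ with $a - b = 2x$. Item (iv) then comes almost for free: taking $r = \norm{x}{K}$ and projecting such a segment onto $u^\perp$, by (i) the projected segment lies in $r(K|u^\perp) \subseteq r\,\blaschke_u(K)$ and has endpoint difference $2(x|u^\perp)$, yielding $\norm{x|u^\perp}{\blaschke_u(K)} \le r$. For (iii), I would use the equivalent formulation ``$\norm{u}{K} \le r$ iff $K$ contains a segment of length $2|u|/r$ parallel to $u$''; since the shaking preserves fiber lengths in direction $u$ over every point of $K|u^\perp$, both $K$ and $\blaschke_u(K)$ share the same maximum such length, giving equality of the gauges.

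For (v) and (vi), with $u = e_i$ I would fiber both bodies by lines parallel to $e_i$ over the lattice points $y \in \Z^n \cap e_i^\perp$, which parametrize exactly the projections of $\Z^n$ onto $e_i^\perp$. Over each such $y$, the fiber of $K$ is a closed interval of length $L(y) = f_{e_i,K}(y)$, while the fiber of $\blaschke_{e_i}(K)$ is the translate $[y, y + L(y)e_i]$. The elementary estimate that a closed interval of length $L$ contains at most $\lfloor L \rfloor + 1$ integer translates of $e_i$, with equality attained on $[0, L]$, yields (v) after summing over $y$. For (vi), fibers of $\inter K$ and $\inter\blaschke_{e_i}(K)$ over any $y \in \inter(K|e_i^\perp)$ are both open intervals of common length $L(y)$, while fibers above $y \in \partial(K|e_i^\perp)$ contribute no interior lattice points on either side (they lie in a supporting hyperplane of the respective body). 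The complementary estimate ``an open interval of length $L$ contains at least $\max\{0, \lceil L\rceil - 1\}$ integers, with equality on $(0, L)$'' then delivers (vi). I expect the only real care-point to be this boundary bookkeeping in (vi), since an oversight would spuriously decouple $\inter K$ from $\inter\blaschke_{e_i}(K)$; the standard supporting-hyperplane argument dispatches it cleanly.
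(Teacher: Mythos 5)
Your proof is correct and follows essentially the same route as the paper's: (i)--(ii) directly from the definition, (iii)--(iv) via the chord/segment characterization of the gauge of $\cs(K)$ combined with the projection step from (i), and (v)--(vi) by fiberwise comparison over $\Z^n\cap e_i^\perp$ using the extremal lattice-point counts of closed (resp.\ open) intervals anchored at a lattice point. The differences are purely presentational; your boundary bookkeeping in (vi) is in fact slightly more explicit than the paper's.
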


\begin{proof}
i) and ii) follow directly from the definition of $\blaschke_u(K)$. For iii), if $u=(r/2)(x-y)$ for any $x,y\in\R^n$ and any $r\in\R\setminus\{0\}$, projecting onto $u^\bot$ yields $x|u^\bot=y|u^\bot$. If we denote $\ell_z=z+\R u$ for $z\in K|u^\bot$, and $d_z=d(u^\bot, \ell_z\cap K)$ the signed Euclidean distance, then $d_x=d_y$ and by definition $z\in K$ if and only if $z-d_zu\in\blaschke_u(K)$. Therefore, given $r>0$ and considering $$ r\cdot\frac{x-y}{2} = r\cdot\frac{(x-d_xu)-(y-d_xu)}{2} $$ we obtain that $u\in r\,\cs(K)$ if and only if $u\in r\,\cs(\blaschke_u(K))$, i.e. iii).

For iv) let $r=\norm{x}{K}^{-1}$. Then there are $a,b\in K$ such that $rx = \frac{1}{2}(a-b)$ and from i) it follows that $$ r\cdot x|u^\perp = \frac{1}{2}\big(a|u^\perp - b|u^\perp\big) \in \cs\big(\blaschke_u(K)\big).$$ Thus $\norm{r\cdot x|u^\perp}{\blaschke_u(K)} \leq 1$, which implies iv) by the choice of $r$.

In order to prove v), we start with an interval $I=[a,b]\subseteq\R$ and show that the number of lattice points in an interval of length $b-a$ is maximized when $a\in\Z$. Otherwise, we could let $\delta = a-\lfloor a\rfloor$ and observe that
\begin{equation*}
    \begin{split}
        |(I-\delta)\cap\Z|&=\lfloor b-\delta\rfloor-\lceil a-\delta\rceil+1=\lfloor b-\delta\rfloor-\lfloor a\rfloor+1\\
        &\geq\lfloor b\rfloor-\lfloor a\rfloor = \lfloor b\rfloor - \lceil a\rceil +1=|I\cap\Z|
    \end{split}
\end{equation*}
In order to obtain v) it is then enough to note that the lattice points in $K$ and $\blaschke_{e_i}(K)$ are contained in intervals of the same lengths, while in $\blaschke_{e_i}(K)$, these intervals start at a lattice point and therefore contain at least as many lattice points as those in $K$ (cf.\ Figure \ref{fig:blaschke}).

vi) is proved with the same argument, but since the intervals involved are open, translating them such that they start at a lattice point will potentially reduce, but never increase, their lattice point count.
\end{proof}

\begin{proof}[Proof of Theorem \ref{prop:anti-blocking}]
Let $v_1,...,v_n\in\Z^n$ be linearly independent such that $\norm{v_i}{K} = \sukz_i(K)$. Since all the functionals involved are invariant with respect to unimodular transformations, we may assume that the matrix $[v_1,...,v_n]$ is an upper triangular matrix (e.g., a Hermite-normal-form \cite[Section 4.1]{schrijver}). Let $K_0=K$ and for $j\in [n]$, let $K_j=\blaschke_{e_j}(K_{j-1})$. We show that $A:=K_n$ is the desired body. To this end, we prove the following statement inductively.

\begin{claim}
\label{claim:1}
For $j\in\{0,...,n\}$, there exist linearly independent vectors $u_1,...,u_n\in\Z^n$ such that $\norm{u_i}{K_j} \leq \lambda_i(K)$ and the matrix $[u_1,...,u_n]$ is of the form
\begin{equation}
\label{eq:matrix}
\begin{pmatrix}
D_j & 0 \\
0 & T_{n-j}
\end{pmatrix},
\end{equation}
where $D_j$ is a $j\times j$-diagonal matrix and $T_{n-j}$ is an $(n-j)\times(n-j)$-upper triangular matrix. 
\end{claim}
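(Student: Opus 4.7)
\medskip

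\textbf{Proof plan for Claim \ref{claim:1}.} The plan is to proceed by induction on $j$. For the base case $j=0$ I would simply take $u_i=v_i$, so that $\norm{u_i}{K_0}=\norm{v_i}{K}=\lambda_i(K)$, and the matrix $[u_1,\dots,u_n]$ is (by construction) upper triangular, matching the desired form with the empty block $D_0$ and $T_n=[v_1,\dots,v_n]$.

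For the inductive step, suppose the claim holds at level $j-1$. Then the given vectors $u_1,\dots,u_n$ have the property that, by the block structure in \eqref{eq:matrix} applied with $j-1$, the columns $u_1,\dots,u_{j-1}$ are (nonzero) integer multiples of $e_1,\dots,e_{j-1}$, respectively, the column $u_j$ is a nonzero integer multiple of $e_j$ (it is the first column of the upper triangular block $T_{n-j+1}$, so its support is contained in $\{j\}$), and for $i\geq j+1$ the column $u_i$ has support contained in $\{j,j+1,\dots,i\}$ with a nonzero entry in position $i$. My candidates at level $j$ will be $u_i^{\text{new}}:=u_i$ for $i\leq j$ and $u_i^{\text{new}}:=u_i|e_j^\perp$ for $i\geq j+1$; the latter are still in $\Z^n$ because projecting onto $e_j^\perp$ simply erases the $j$-th coordinate.

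The verifications are then short and use Lemma \ref{lemma:blaschke} repeatedly. For $i\leq j-1$, since $u_i$ is a multiple of $e_i$ it satisfies $u_i|e_j^\perp=u_i$, so Lemma \ref{lemma:blaschke}(iv) yields
\[
\norm{u_i^{\text{new}}}{K_j}\leq\norm{u_i}{K_{j-1}}\leq\lambda_i(K).
\]
For $i=j$, the vector $u_j$ is a nonzero integer multiple of $e_j$, hence using $K_j=\blaschke_{e_j}(K_{j-1})$ and Lemma \ref{lemma:blaschke}(iii) one obtains the equality $\norm{u_j}{K_j}=\norm{u_j}{K_{j-1}}\leq\lambda_j(K)$. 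For $i\geq j+1$, Lemma \ref{lemma:blaschke}(iv) directly gives
\[
\norm{u_i^{\text{new}}}{K_j}=\norm{u_i|e_j^\perp}{K_j}\leq\norm{u_i}{K_{j-1}}\leq\lambda_i(K).
\]
Finally, the new matrix $[u_1^{\text{new}},\dots,u_n^{\text{new}}]$ is block upper triangular with a $j\times j$ diagonal block $D_j$ (whose entries are the diagonal entries of $D_{j-1}$ together with the $(1,1)$-entry of $T_{n-j+1}$), and an $(n-j)\times(n-j)$ upper triangular block $T_{n-j}$ obtained as the trailing principal submatrix of $T_{n-j+1}$; since all diagonal entries remain nonzero, the columns are linearly independent.

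The step I expect to require the most care is the one concerning the ``shaken'' coordinate, namely showing that the information about the $j$-th minimum is preserved after projecting away $e_j$. This is precisely where the asymmetry between Lemma \ref{lemma:blaschke}(iii) and (iv) is used: part (iv) would fail for $u_j$ (since $u_j|e_j^\perp=\vnull$), and one must instead invoke part (iii), which is available exactly because the shaking is performed along $e_j$ and $u_j$ lies in $\R e_j$. All other bookkeeping (integrality of the projected vectors, preservation of the block upper triangular form, and nonvanishing of the diagonal entries) then follows from the structural description inherited from the induction hypothesis.
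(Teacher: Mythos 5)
Your proposal is correct and follows essentially the same route as the paper: induction on $j$ starting from the Hermite-normal-form basis, projecting all vectors except the $j$-th onto $e_j^\perp$, and using Lemma \ref{lemma:blaschke} iv) for those while invoking iii) for the unprojected vector along the shaking direction. The only difference is cosmetic (your induction runs from $j-1$ to $j$ and you spell out the matrix bookkeeping in more detail).
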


For $j=0$, Claim \ref{claim:1} is clearly true with $u_i=v_i$, $1\leq i \leq n$. So we assume that the claim holds for some $j<n$. We choose $u_i^\prime = u_i|e_{j+1}^\perp$, for $i\neq j+1$, and $u_{j+1}^\prime = u_{j+1}$.  In view of $K_{j+1} =\blaschke_{e_{j+1}}(K_j)$, Lemma \ref{lemma:blaschke} iv) and our induction hypothesis, we have $$\norm{u_i^\prime}{K_{j+1}} = \norm{u_i|e_{j+1}^\perp}{K_{j+1}}\leq \norm{u_i}{K_j}\leq \lambda_i(K),$$ for all $i\neq j+1$. From Lemma \ref{lemma:blaschke} iii) it also follows that $$\norm{u_{j+1}^\prime}{K_{j+1}} = \norm{u_{j+1}}{K_j}\leq\lambda_{j+1}(K).$$ The matrix $[u_1^\prime,...,u_n^\prime]$ differs from $[u_1,...,u_n]$ only by the zeros in the $(j+1)$-th row after the diagonal entry. Therefore, the system $u_1^\prime,...,u_n^\prime\in\Z^n$ is also linearly independent and it fulfills the requirements of Claim \ref{claim:1} for $j+1$.

Hence, $A=K_n$ satisfies $\lambda_i(A) \leq \lambda_i(K)$ for all $i\in [n]$. The other requirements i)--iii) on $A$ follow from a repeated application of Lemma \ref{lemma:blaschke} ii),v) and vi). It remains to prove that $A$ is indeed anti-blocking. To this end, we use induction again to prove:

\begin{claim}
\label{claim:2}
 For $j\in\{0,...,n\}$ and $x\in K_j$, we have $x|e_i^\perp \in K_j$, for all $1\leq i\leq j$.
 \end{claim}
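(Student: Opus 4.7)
The plan is to prove Claim \ref{claim:2} by induction on $j$. The base case $j=0$ is vacuous, so suppose the claim is true for some $j-1 \in \{0,\dots,n-1\}$ and we show it for $j$. Fix $x \in K_j$ and $i \in [j]$; we must exhibit $x|e_i^\perp \in K_j$. The case $i=j$ is immediate: since $K_j|e_j^\perp = K_{j-1}|e_j^\perp$ (a shaking does not alter the projection onto the shaking hyperplane), Lemma \ref{lemma:blaschke} i) applied to the shaking $K_j = \blaschke_{e_j}(K_{j-1})$ gives $x|e_j^\perp \in K_{j-1}|e_j^\perp \subseteq K_j$.

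The interesting case is $1 \leq i \leq j-1$, where one has to combine the explicit description of the Blaschke shaking with the inductive hypothesis. For $y \in K_{j-1}|e_j^\perp$ let $g(y)$ denote the length of the fiber $K_{j-1}\cap(y+\R e_j)$, so that by definition
\begin{equation}
K_j = \bigl\{\, y + t e_j : y \in K_{j-1}|e_j^\perp,\ 0 \le t \le g(y)\,\bigr\}.
\end{equation}
Write $x = y + t e_j$ with $y \in K_{j-1}|e_j^\perp$ and $0 \le t \le g(y)$. Since $e_j \in e_i^\perp$ we have $x|e_i^\perp = y|e_i^\perp + t e_j$, so the task reduces to proving $y|e_i^\perp \in K_{j-1}|e_j^\perp$ and $t \leq g(y|e_i^\perp)$.

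For this, choose real numbers $s_{\min} \leq s_{\max}$ with $s_{\max}-s_{\min}=g(y)$ such that $y + s e_j \in K_{j-1}$ for every $s \in [s_{\min},s_{\max}]$. Applying the inductive hypothesis to each point $y + s e_j \in K_{j-1}$ (which is legitimate because $i \leq j-1$) yields $y|e_i^\perp + s e_j = (y + s e_j)|e_i^\perp \in K_{j-1}$ for all $s \in [s_{\min},s_{\max}]$. Consequently the fiber of $K_{j-1}$ over $y|e_i^\perp$ in direction $e_j$ has length at least $s_{\max}-s_{\min} = g(y)$, which simultaneously gives $y|e_i^\perp \in K_{j-1}|e_j^\perp$ and $g(y|e_i^\perp) \geq g(y) \geq t$. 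Hence $x|e_i^\perp \in K_j$, closing the induction.

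The main conceptual point — and the only step that requires a small amount of care — is that after shaking, the parameter $t$ no longer measures an absolute height in $K_{j-1}$ but rather a height relative to $e_j^\perp$; one must therefore translate the inductive hypothesis into a statement about \emph{fiber lengths}, which is accomplished by applying the hypothesis pointwise along the entire fiber over $y$ in $K_{j-1}$ rather than at a single point.
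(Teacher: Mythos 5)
Your proof is correct and follows essentially the same route as the paper: induct on $j$, handle $i=j$ via Lemma \ref{lemma:blaschke} i), and for $i<j$ lift $x$ to the fiber of $K_{j-1}$ over $y=x|e_j^\perp$, apply the inductive hypothesis along that fiber, and conclude that the fiber over $y|e_i^\perp$ is at least as long, so that $x|e_i^\perp$ survives the shaking. The only cosmetic difference is that the paper projects the sub-segment of length $x_j$ starting at the bottom of the fiber, whereas you project the entire fiber; both yield the same lower bound on $g(y|e_i^\perp)$.
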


\begin{figure}[htb]
\centering
\includegraphics[width = .75\textwidth]{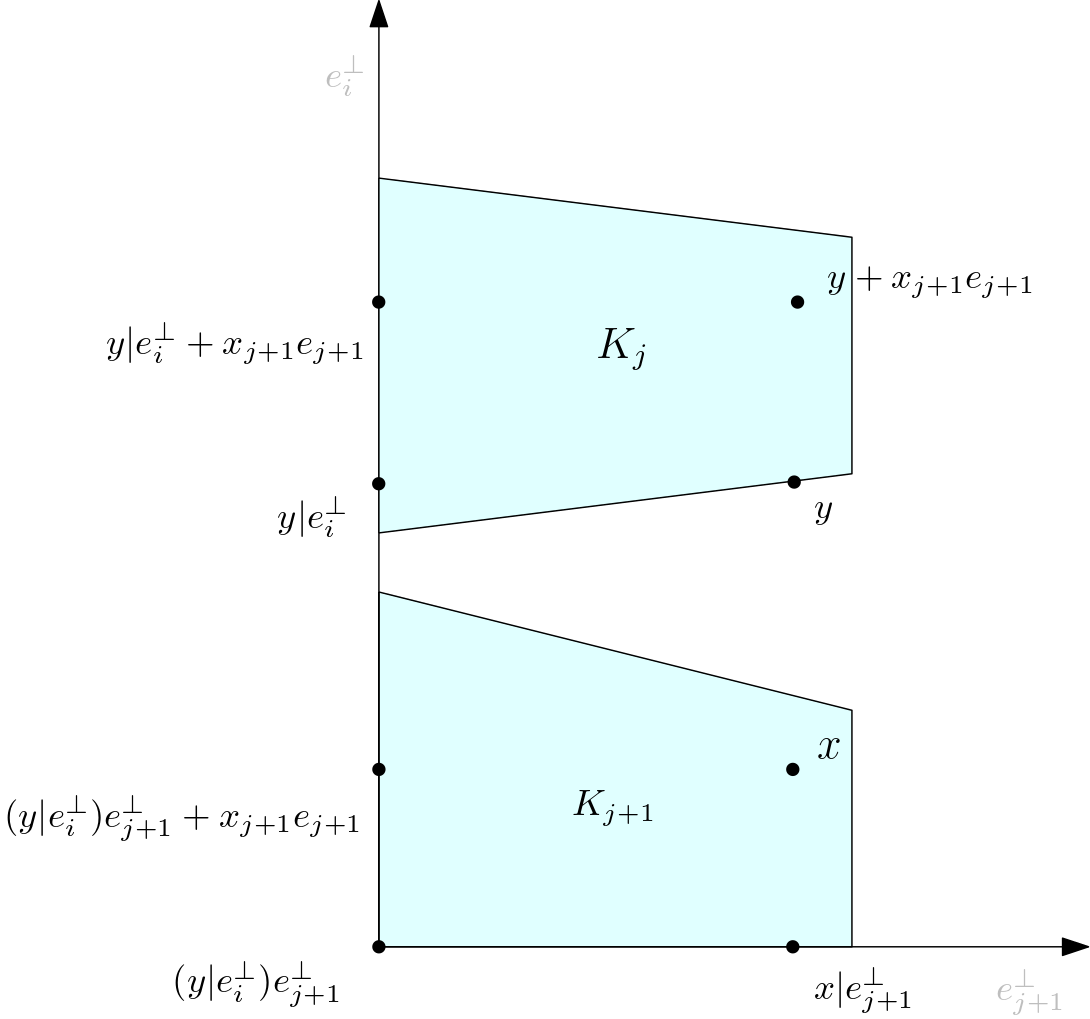}
\caption{The construction for the proof of Claim 2.}
\label{fig:anti-blocking}
\end{figure}

For $j=0$, the statement is trivial. So we assume Claim \ref{claim:2} holds for some $j\in [n]$. Let $x\in K_{j+1}$. By Lemma \ref{lemma:blaschke} i) it follows that $x|e_{j+1}^\perp\in K_{j+1}$. So we consider $i\in [j]$. Let $x_{j+1}$ be the $(j+1)$-th entry of $x$ and let $y\in K_j$ be the lowest (with respect to $e_{j+1}$) point in the preimage of $x|e_{j+1}^\perp$ under the orthogonal projection $K_j\rightarrow e_{j+1}^\perp$ (cf.\ Figure \ref{fig:anti-blocking}). Then, $[y, y+x_{j+1}e_{j+1}]\subseteq K_j$. By induction, it follows that $[y|e_i^\perp, y|e_i^\perp+x_{j+1}e_{j+1}]\subseteq K_j$. Since $(y|e_i^\perp)|e_{j+1}^\perp = (x|e_i^\perp)|e_{j+1}^\perp$, the interval $[(x|e_i^\perp)|e_{j+1}^\perp, (x|e_i^\perp)|e_{j+1}^\perp+x_{j+1}e_{j+1}]$ is contained in $K_{j+1}$. Since $(x|e_i^\perp)|e_{j+1}^\perp+x_{j+1}e_{j+1} = x|e_i^\perp$, Claim \ref{claim:2} holds for $j+1$.

For $j=n$, Claim \ref{claim:2} yields that $A$ is anti-blocking.

\end{proof}

The proof of Claim \ref{claim:2} essentially corresponds to the argument given in the proof of Lemma 1.2 in \cite{ccg}.

One of the reasons why anti-blocking bodies are beneficial when dealing with successive minima problems is that the successive minima are always realized by the standard basis of $\Z^n$. 

\begin{lemma}
\label{lemma:succminantiblocking}
Let $K\in\K^n$ be anti-blocking. Then the coordinates can be permuted in such a way that $\norm{e_i}{K}=\sukz_i(K)$ holds. In this case, one also has $\frac{2}{\sukz_i(K)}e_i\in K$, $1\leq i \leq n$.
\end{lemma}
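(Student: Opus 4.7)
The plan is to isolate the following key property of anti-blocking bodies and then apply it inductively:

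\textbf{Key observation.} If $K$ is anti-blocking, then for every $z\in\R^n$ and every $i\in[n]$,
\begin{equation}
\norm{z_i e_i}{K} \leq \norm{z}{K}. \notag
\end{equation}
This is the crux: writing $r=\norm{z}{K}$, we have $z/r=\tfrac12(a-b)$ with $a,b\in K$. Since $K\subseteq\R^n_{\geq 0}$, the anti-blocking property applied to $a$ (resp.\ $b$) with the box $[0,a_i]e_i$ (resp.\ $[0,b_i]e_i$) gives $a_ie_i,b_ie_i\in K$, whence $(z_i/r)e_i=\tfrac12(a_ie_i-b_ie_i)\in\cs(K)$, i.e.\ $\norm{z_ie_i/r}{K}\leq 1$.

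\textbf{Permutation via induction.} Using the key observation, we induct on $k$ to show that after a suitable permutation of the coordinates, $\norm{e_j}{K}=\sukz_j(K)$ for every $j\leq k$. The base case $k=0$ is vacuous. For the step from $k$ to $k+1$, pick a lattice vector $v\in\Z^n$ with $|v|_{\cs(K)}\leq\sukz_{k+1}$ and $v\notin\spann(e_1,\dots,e_k)$ (such $v$ exists because $\sukz_{k+1}\cs(K)\cap\Z^n$ has rank at least $k+1$). Then some coordinate $i>k$ satisfies $v_i\in\Z\setminus\{0\}$, and the key observation together with $|v_i|\geq 1$ gives
\begin{equation}
\norm{e_i}{K} \;=\; \frac{1}{|v_i|}\norm{v_ie_i}{K} \;\leq\; \norm{v}{K} \;\leq\; \sukz_{k+1}. \notag
\end{equation}
If this inequality were strict, then letting $j$ be minimal with $\sukz_j>\norm{e_i}{K}$, the vectors $e_1,\dots,e_{j-1},e_i$ would form $j$ linearly independent lattice vectors in $\norm{e_i}{K}\cdot\cs(K)$, contradicting the definition of $\sukz_j$; hence $\norm{e_i}{K}=\sukz_{k+1}$. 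Swapping the coordinates with indices $k+1$ and $i$ preserves the induction hypothesis for $j\leq k$ and achieves $\norm{e_{k+1}}{K}=\sukz_{k+1}$.

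\textbf{The inclusion $\tfrac{2}{\sukz_i(K)}e_i\in K$.} From $\norm{e_i}{K}=\sukz_i(K)$ we get $e_i/\sukz_i\in\cs(K)$, so $\tfrac{2}{\sukz_i}e_i=a-b$ for some $a,b\in K$. Comparing coordinates gives $a_j=b_j$ for $j\neq i$ and $a_i-b_i=2/\sukz_i>0$, so $0\leq a_i-b_i\leq a_i$. Since $a\in K$, the anti-blocking property applied to the box $[0,a_i-b_i]e_i$ (which lies coordinate-wise below $a$) yields $\tfrac{2}{\sukz_i}e_i=(a_i-b_i)e_i\in K$, as desired.

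The only non-routine step is the key observation; once that is in place, both assertions fall out by elementary bookkeeping with the successive minima and the anti-blocking closure.
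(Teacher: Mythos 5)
Your proof is correct and follows essentially the same approach as the paper: your key observation that coordinate projections do not increase the gauge of $\cs(K)$ for anti-blocking $K$ is exactly what the paper extracts from its shaking lemma (Lemma \ref{lemma:blaschke} iv)), and the remaining steps differ only in bookkeeping — you build the permutation greedily by induction rather than via the nonvanishing of $\det[v_1,\dots,v_n]$, and you obtain the final inclusion from $a_i-b_i\leq a_i$ rather than by arguing that $b_i=0$. If anything, your inductive step verifies the equality $\norm{e_i}{K}=\sukz_i(K)$ a bit more explicitly than the paper's version.
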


\begin{proof}
Let $v_1,...,v_n\in\Z^n$ be linearly independent with $\norm{v_i}{K}=\sukz_i(K)$. Then there exists a permutation $\sigma$ of $[n]$ such that the $\sigma_i$-th entry of $v_i$ is non-zero. Otherwise the determinant of $[v_1,...,v_n]$ would be zero, a contradiction. For the sake of simplicity we assume that $\sigma$ is the identity. Since $K$ is anti-blocking, the projection $w_i$ of $v_i$ on $\spann\{e_i\} = \bigcap_{j\neq i}e_j^\perp$ is contained in $K$ and a repeated application of Lemma \ref{lemma:blaschke} iv) shows that $\norm{w_i}{K}\leq\norm{v_i}{K}=\sukz_i(K)$. By the minimality of the $\sukz_i$'s and the fact that $w_i\in\spann\{e_i\}\cap\Z^n$, we obtain $w_i=e_i$ and $\norm{e_i}{K}=\sukz_i(K)$.

For the second part we deduce from $\norm{e_i}{K}=\lambda_i(K)$ that $$\frac{1}{\lambda_i(K)}e_i = \frac{1}{2}(a-b),$$ for some $a,b\in K$. Since $1/\sukz_i(K)$ is the maximal number $r$ such $re_i\in\cs(K)$, $b_i$ must be zero. So $b$ is a member of $e_i^\perp$  and since $K$ is anti-blocking we obtain $$\frac{2}{\sukz_i(K)}e_i = \Big(\frac{2}{\sukz_i(K)}e_i +b\Big)\Big|\spann\{e_i\} = a|\spann\{e_i\} \in K$$ as desired.
\end{proof}

\section{$n$-dimensional case}
\label{sec:dimn}
We start by proving Proposition \ref{prop:covrad}.

\begin{proof}[Proof of Proposition \ref{prop:covrad}]
For the upper bound, it is enough to show that $\mu K$ contains a measurable set $S$ with $S + \Z^n = \R^n$ and $\inter S \cap (z+\inter S)=\emptyset$, for all $z\in\Z^n\setminus\{0\}$. In that case, we have $$\LE(K)=\vol((K\cap\Z^n) + S) \leq \vol(K+\mu K) = (1+\mu)^n\vol(K).$$ In order to find $S$, let $P=[0,1]^n$. There are finitely many translates $\mu K+x_i$, $x_i\in\Z^n$, $1\leq i \leq m$, that cover $P$. We define inductively $P_1 = P \cap (\mu K+x_1)$ and $$P_i = \big( P\setminus (\bigcup_{j<i} P_j)\big) \cap (\mu K +x_i).$$ Now, let $S_i = P_i-x_i\subseteq \mu K$ and $S=\bigcup_{i=1}^m S_i$. We claim that $S$ is the desired set. To prove this, we show that $S$ has volume $1$ and that its $\Z^n$-translates do not overlap. 

Clearly the $P_i$'s are interiorly disjoint, i.e., $\inter (P_i) \cap \inter (P_j) = \emptyset$, and satisfy $\bigcup_{i=1}^m P_i =P$. The $S_i$'s are interiorly disjoint too; suppose there are $i\neq j$ such that $\inter S_i$ intersects $\inter S_j$. Then, $\inter P_i$ intersects $\inter P_j +x_i-x_j$. Since the $\Z^n$ translates of $P$ are interiorly disjoint, we must have $x_i=x_j$, a contradiction. Therefore the $S_i$'s are interiorly disjoint and it follows that
\begin{equation}
\vol(S)=\sum_{i=1}^m\vol(S_i)=\sum_{i=1}^m\vol(P_i)=\vol(P)=1.
\end{equation}
Now assume that $\inter S$ intersects $\inter S+ x$ for some $x\in\Z^n$. Then there exist $i,j\in [m]$ such that $\inter P_i-x_i$ intersects $\inter P_j-x_j+x$. Again, since the $\Z^n$-translates of $P$ are interiorly disjoint, as well as the $P_i$'s, we must have $i=j$ and $x=0$. Hence, the $\Z^n$-translates of $S$ are interiorly disjoint and so $S$ is as desired. This finishes the proof of the upper bound.

For the lower bound, we apply \eqref{eq:vdc} to $K^\prime=(1-\mu) \inter K$ and obtain a vector $t\in\R^n$ such that $\vol(K^\prime)\leq \LE(K^\prime+t)$. Since $\mu K+\Z^n = \R^n$, we may assume that $t\in\mu K$ holds. Thus, since $\mu\leq 1$, 
\begin{equation}
\vol(K)(1-\mu)^n = \vol(K^\prime)\leq\LE(K^\prime+t)\leq\LE\big((1-\mu)\inter K + \mu K) = \LE(\inter K).
\end{equation}
In order to see that the inequality is tight, consider $K = [0,m]^n$, where $m\in\Z_{>0}$. For  such cubes one has $\vol(K) = m^n$, $\LE(K)=(m+1)^n$, $\LE(\inter K) = (m-1)^n$ and $\mu(K) = 1/m$. So equality is achieved for both of the bounds.
\end{proof}

The strategy of finding an appropriate tiling used in the proof of the upper bound above has also been applied, for instance, in the proof of Blichfeldt's classical variant of Theorem~\ref{t:vdc} \cite[Ch.2, Theorem 5.2]{geometryofnumbers}. Moreover, in \cite{xue}, the authors showed that convex tilings in these conditions need not exist. 

Next, we come to the proof of Theorem \ref{thm:dimn}.

\begin{proof}[Proof of Theorem \ref{thm:dimn}]
In order to prove \eqref{eq:upperboundn}, we may assume that $K$ is anti-blocking by Theorem \ref{prop:anti-blocking}. After renumbering the coordinates, we can also assume that $\norm{e_i}{K}=\sukz_i$, $1\leq i \leq n$ holds (cf.\ Lemma \ref{lemma:succminantiblocking}).  For a set $I\subseteq[n]$, let $L_I = \spann\{e_i:i\in I\}$.  
An inequality of Rogers and Shephard \cite[Theorem 1]{rogers-shephard} yields that 
\begin{equation}
\label{eq:rogers-shephard}
\vol_k(K\cap L_I)\vol_{n-k}(K | L_I^\perp) \leq { n\choose k}\vol(K),
\end{equation}
for any $I\in {[n]\choose k}$. By Lemma \ref{lemma:succminantiblocking} we have $\frac{2}{\sukz_i} e_i \in K$, so from \eqref{eq:rogers-shephard}, we deduce that
\begin{equation}
\begin{split}
\vol_{n-k}(K|L_I^\perp) & \leq {n\choose k} \,\frac{\vol(K)}{\vol_k(K\cap L_I)} \leq {n\choose k}\, \frac{\vol(K)}{\vol_k(\conv\{(2/\sukz_i)e_i:i\in I\})}\\
&=k! {n\choose k} \vol(K) \prod_{i\in I} \frac{\sukz_i}{2}\leq \vol(K) \prod_{i\in I} \frac{n\sukz_i}{2}.
\end{split}
\end{equation}
Combining this with Davenport's inequality \eqref{eq:mixvol} yields
\begin{equation}
\LE (K) \leq \vol(K) \sum_{I\subseteq [n]}  \prod_{i\in I} \frac{n\sukz_i}{2} = \vol(K)\prod_{i=1}^n\left(1+\frac{n\sukz_i}{2}\right)
\end{equation}
as desired. In order to prove \eqref{eq:lowerboundn}, we use the lower bound \eqref{eq:covraddown} in terms of $\mu(K)$, as well as the relation \cite[Lemma 2.4]{coveringminima} 
\begin{equation}
\label{eq:lambdamu}
    \mu(K) \leq \sum_{i=1}^n \frac{\sukz_i}{2}.
\end{equation}
Since $\sukz_n \leq 2/n$, \eqref{eq:lambdamu} yields that $\mu(K)\leq 1.$ Thus, we may apply Proposition \ref{prop:covrad} and obtain
\begin{equation}
    \begin{split}
        \LE(\inter K) &\geq \vol(K)\big(1-\mu(K))^n\geq \vol(K)\Big(1-\sum_{i=1}^n \frac{\sukz_i}{2}\Big)^n\\
        & = \vol(K)\Big(\frac{1}{n}\sum_{i=1}^n\Big(1-\frac{n\sukz_i}{2}\Big)\Big)^n\\
        & \geq \vol(K) \prod_{i=1}^n \Big(1-\frac{n\sukz_i}{2}\Big),
    \end{split}
\end{equation}
where we used the inequality of arithmetic and geometric means in the last step.
\end{proof}

Since $K$ is anti-blocking in our case, \eqref{eq:mixvol} can also be derived directly as follows:

\begin{equation}
\begin{split}
\LE(K) & = \vol\big((K\cap\Z^n) + [-1,0]^n\big)\leq \vol(K+[-1,0]^n)\\
& =\sum_{k=1}^n\,\sum_{I\in {[n]\choose k}} \vol_{n-k}(K|L_I^\perp),
\end{split}
\end{equation}
where the last equation follows, since $K$ is anti-blocking and thus, the Minkowski sum can be decomposed into a union of disjoint prisms: $$K+[-1,0]^n = \bigcup_{I\subseteq [n]} \left\{ x\in\R^n: x|L_I^\perp\in K|L_I^\perp\text{ and }x_i\in [-1,0],\,\forall i\in I \right\}.$$

Next, we come to the proof of our reverse version of Schymura's inequality \eqref{eq:schymura}.

\begin{proof}[Proof of Theorem \ref{thm:revdavi}]
For an ordered linearly independent set $B = \{b_1,...,b_k\}$ and  a vector $x=\sum_{i=1}^k\alpha_i b_i\in\spann\,B$, we write $\mathrm{supp}_B(x) = \{i:\alpha_i\neq 0\}$. We will show the following statement:

\begin{claim}
\label{claim:3}
Let $\Lambda$ be an $n$-dimensional lattice and $B=\{b_1,...,b_n\}\subseteq\Lambda$ be a linearly independent set. For any convex and bounded (but not necessarily closed) set $K\subseteq\mathrm{span}(\Lambda)$, and any $t\in\mathrm{span}(\Lambda)$, it holds that
\begin{equation}\label{eq:davinduction}
    \LE_\Lambda(K+t) \leq \sum_{I\subseteq \mathrm{supp}_B(t)} \LE_{\Lambda | L_I^\perp}(K|L_I^\perp),
\end{equation} 
where $L_I = \mathrm{span} \{b_i : i\in I\}.$
\end{claim}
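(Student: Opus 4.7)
The plan is to prove Claim \ref{claim:3} by induction on $k := |\mathrm{supp}_B(t)|$. The base case $k = 0$ is immediate: then $t = 0$ and the right-hand side collapses to the single term with $I = \emptyset$, for which $L_\emptyset = \{0\}$ gives $K|L_\emptyset^\perp = K$ and $\Lambda|L_\emptyset^\perp = \Lambda$, so the inequality becomes an equality. The crux of the inductive step is the following one-direction lemma: for every convex bounded $C \subseteq \mathrm{span}(\Lambda)$, every $j \in [n]$, and every $r \in \R$,
\begin{equation}
\LE_\Lambda(C + r b_j) \leq \LE_\Lambda(C) + \LE_{\Lambda|L_{\{j\}}^\perp}\bigl(C|L_{\{j\}}^\perp\bigr).
\end{equation}
I would establish this by decomposing along the fibers of the orthogonal projection $\pi\colon \mathrm{span}(\Lambda) \to L_{\{j\}}^\perp$. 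Since $rb_j$ lies in $\ker \pi$, for each $y \in C|L_{\{j\}}^\perp \cap (\Lambda|L_{\{j\}}^\perp)$ the slices $C \cap \pi^{-1}(y)$ and $(C + rb_j) \cap \pi^{-1}(y)$ are bounded intervals on the affine line $\pi^{-1}(y)$ that differ by the translation $rb_j$; translating a bounded interval along a line can change the number of lattice points on that line by at most one, and summing over $y$ yields the claim.

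For the inductive step proper, set $S := \mathrm{supp}_B(t)$, fix $j \in S$, and decompose $t = t' + \alpha_j b_j$ with $\alpha_j \neq 0$ and $t' := \sum_{i \in S \setminus \{j\}} \alpha_i b_i$, so that $\mathrm{supp}_B(t') = S \setminus \{j\}$. Applying the one-direction lemma with $C = K + t'$ gives
\begin{equation}
\LE_\Lambda(K+t) \leq \LE_\Lambda(K + t') + \LE_{\Lambda|L_{\{j\}}^\perp}\bigl((K+t')|L_{\{j\}}^\perp\bigr).
\end{equation}
The first summand is controlled by the inductive hypothesis applied to $K$, $\Lambda$, $B$, $t'$ (whose support is smaller), producing $\sum_{I \subseteq S,\, j \notin I} \LE_{\Lambda|L_I^\perp}(K|L_I^\perp)$. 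For the second, I would apply the inductive hypothesis inside the subspace $L_{\{j\}}^\perp$, with the projected lattice $\Lambda' := \Lambda|L_{\{j\}}^\perp$, the family $B' := \{b_i|L_{\{j\}}^\perp : i \in [n] \setminus \{j\}\}$ (linearly independent in $L_{\{j\}}^\perp$ because $b_j$ spans the kernel of the projection), the convex set $K' := K|L_{\{j\}}^\perp$, and the translation $t'' := t'|L_{\{j\}}^\perp$, whose $B'$-support is still $S \setminus \{j\}$.

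The step that requires the most care, and which I expect to be the main obstacle, is matching the projected objects produced by the inner induction back to the original formulation. For $J \subseteq S \setminus \{j\}$ and $I := J \cup \{j\}$, one needs the identifications
\begin{equation}
\Lambda' | (L_J')^\perp = \Lambda | L_I^\perp \quad \text{and} \quad K' | (L_J')^\perp = K | L_I^\perp,
\end{equation}
where $L_J' := \mathrm{span}\{b_i|L_{\{j\}}^\perp : i \in J\}$ and the orthogonal complement on the left is taken \emph{inside} $L_{\{j\}}^\perp$. These rest on two facts: orthogonal projection is transitive along the nested subspaces $L_I^\perp \subseteq L_{\{j\}}^\perp \subseteq \mathrm{span}(\Lambda)$, and $(L_J')^\perp = L_I^\perp$ when both are viewed inside $L_{\{j\}}^\perp$ (this holds because $L_I = L_J \oplus L_{\{j\}}$, so a vector of $L_{\{j\}}^\perp$ is orthogonal to $y \in L_J$ if and only if it is orthogonal to $y|L_{\{j\}}^\perp$). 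Once these identifications are in place, the inner induction yields $\sum_{J \subseteq S \setminus \{j\}} \LE_{\Lambda|L_{J\cup\{j\}}^\perp}(K|L_{J\cup\{j\}}^\perp) = \sum_{I \subseteq S,\, j \in I} \LE_{\Lambda|L_I^\perp}(K|L_I^\perp)$, and combining both contributions reassembles the full sum $\sum_{I \subseteq S} \LE_{\Lambda|L_I^\perp}(K|L_I^\perp)$, as required.
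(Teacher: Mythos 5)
Your proposal is correct and follows essentially the same route as the paper's proof: the same induction on $|\mathrm{supp}_B(t)|$, the same one-step estimate $\LE_\Lambda(C+rb_j)\leq \LE_\Lambda(C)+\LE_{\Lambda|b_j^\perp}(C|b_j^\perp)$ obtained by fibering over $b_j^\perp$ and using the one-dimensional interval fact, and the same splitting $t=t'+\alpha_jb_j$ with the induction hypothesis applied both to the original data and to the projected data in $b_j^\perp$. The identifications of iterated projections that you flag as the delicate point are exactly the ones the paper uses (its $\widetilde{L_I}$), and your justification of them is sound.
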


If $t=0$ there is only one summand in \eqref{eq:davinduction} corresponding to $I=\emptyset$, and so \eqref{eq:davinduction} reads as $\LE_\Lambda(K)\leq\LE_\Lambda(K)$, a tautology. Thus, from now on we assume that $t\neq0$.

First we note that if $n=1$, then \eqref{eq:davinduction} states for non-zero $t$ that \begin{equation}\label{eq:davinduction1.5}\LE_\Lambda(K+t) \leq \LE_\Lambda(K) + 1.\end{equation} Since any convex body $K\subseteq\R^1$ is an interval, the statement is confirmed. 

Now, for any $n>1$, we will prove \eqref{eq:davinduction} by induction on $|\supp_B(t)|$. If $|\supp_B(t)|=1$ then $t=\alpha_1b_1$ for some $\alpha_1\neq 0$, and thus $$ \LE_\Lambda(K+t) = \sum_{x\in K|b_1^\perp \cap \Lambda|b_1^\perp} \LE_\Lambda((K+t) \cap (x+\R b_1)).$$ Since the bodies on the right hand side are segments parallel to $t$, we can apply 
\eqref{eq:davinduction1.5} and obtain \begin{equation}
\label{eq:davinduction2}
\begin{split}
\LE_\Lambda(K+t) &\leq \sum_{x\in K|b_1^\perp\cap \Lambda|b_1^\perp} \big(\LE_\Lambda (K \cap (x+\R b_1)) + 1\big)\\
 &= \LE_\Lambda (K) + \LE_{\Lambda | b_1^\perp} (K|b_1^\perp),
\end{split}
\end{equation} 
which corresponds to \eqref{eq:davinduction} in this case.

Finally, let $t=\sum_{i=1}^n \alpha_i b_i$ be an arbitrary non-zero vector in $\mathrm{span}(\Lambda)$. Consider any $j\in\mathrm{supp}_B(t)$. We define $t^\prime = t - \alpha_j b_j$ and $t^{\prime\prime} =t^\prime | b_j^\perp$ as well as $B^\prime = B\setminus\{b_j\}$ and $B^{\prime\prime} = B^\prime | b_j^\perp$. Then, we observe that $$\mathrm{supp}_{B}(t^\prime) = \mathrm{supp}_{B^{\prime\prime}}(t^{\prime\prime}) = \mathrm{supp}_B(t) \setminus \{j\}.$$ Therefore, we obtain with $\widetilde{L_I} = \spann\{ b_i|b_j^\perp: i\in I\}$ that
\begin{align*} \LE_\Lambda(K+t) = &
\LE_\Lambda(K+t^\prime +\alpha_jb_j) \\ 
 \leq & \LE_\Lambda(K+t^\prime)+ \LE_{\Lambda | b_j^\perp}((K+t^\prime)|b_j^\perp)\\
= &\LE_\Lambda(K+t^\prime)+ \LE_{\Lambda | b_j^\perp}(K|b_j^\perp + t^{\prime\prime})\\ 
\leq & \sum_{I\subseteq\mathrm{supp}_B(t)\setminus j} \LE_{\Lambda|L_I^\perp}(K|L_I^\perp)\\
 &+  \sum_{I\subseteq\mathrm{supp}_B(t)\setminus j} \LE_{(\Lambda | b_j^\perp) | \widetilde{L_I}^\perp} ( (K|b_j^\perp) | \widetilde{L_I}^\perp)\\
 =  &\sum_{I\subseteq\mathrm{supp}(t)\setminus j}\Big( \LE_{\Lambda|L_I^\perp}(K|L_I^\perp) + \LE_{\Lambda | L_{I\cup j}^\perp}(K|L_{I\cup j}^\perp)\Big)\\ 
 = &\sum_{I\subseteq\mathrm{supp}_B(t)} \LE_{\Lambda|L_I^\perp}(K|L_I^\perp).
\end{align*}
For the first inequality we used \eqref{eq:davinduction2}, and for the second inequality we used the induction hypothesis \eqref{eq:davinduction} applied to $K$, $\Lambda$, $B$ and $t^\prime$, as well as to $K|b_j^\perp$, $\Lambda|b_j^\perp$, $B^{\prime\prime}$ and $t^{\prime\prime}$. This finishes the proof of Claim \ref{claim:3} and so we obtain
\begin{equation}
\label{eq:translation}
\LE(\inter K+t) \leq \sum_{I\subseteq [n]} \LE_{\Z^n|L_I^\perp}(\inter K | L_I^\perp)
\end{equation}
for any $t\in\R^n$. Now inequality \eqref{eq:revdavi} follows from \eqref{eq:vdc}.

To see that it is tight let $K=[0,k_i]\times ...\times [0,k_n],$ where $k_i\in\Z_{>0}$, and $b_i=e_i$. Then we have
\begin{equation}
\begin{split}
\vol(K)& = \prod_{i=1}^n k_i = \prod_{i=1}^n \big((k_i-1) +1\big)\\
& = \sum_{I\subseteq [n]} \prod_{i\in I} (k_i-1) =  \sum_{ I \subseteq [n]} \LE_{\Z^n | L_I^\perp} ( \mathrm{int}K|L_I^\perp ).\qedhere
\end{split}
\end{equation}
\end{proof}

\section{Two-dimensional case}
\label{sec:dim2}

We start with the proof of Proposition \ref{prop:polygons}. By Pick's Theorem \cite[Theorem 2.8]{beck}, we have for a lattice polygon $P$ that
\begin{equation}
\label{eq:pickinter}
\LE(\inter P) =\vol(P)-\frac{\LE(\bd P)}{2}+1.
\end{equation}
An inequality of Henk, Sch\"urmann and Wills yields that \cite[Equation 1.6]{hsw}
\begin{equation}
\label{eq:hsw}
\frac{\LE(\bd P)}{2}\leq \vol(P)\Big(\frac{\sukz_1(P)}{2}+\frac{\sukz_2(P)}{2}\Big)
\end{equation}
holds, if $-P=P$.

\begin{proof}[Proof of Proposition \ref{prop:polygons}]
Let $\sukz_i = \sukz_i(P)$. Combining \eqref{eq:pickinter} with \eqref{eq:hsw} yields
\begin{equation}
\LE(\inter P) \geq \vol(P) - \vol(P)\Big(\frac{\sukz_1}{2}+\frac{\sukz_2}{2}\Big) + 1.
\end{equation}
By the upper bound in \eqref{eq:minkowski2ndtheorem}, we have $1\geq \vol(P)\sukz_1\sukz_2/4$. Hence,
\begin{equation}
\LE(\inter P)\geq \vol(P)\Big(1-\frac{\sukz_1}{2}-\frac{\sukz_2}{2}+\frac{\sukz_1\sukz_2}{4}\Big) \geq \vol(P)\Big(1-\frac{\sukz_1}{2}\Big)\Big(1-\frac{\sukz_2}{2}\Big),
\end{equation}
and the proof is finished.
\end{proof}



For the proof of Theorem \ref{thm:dim2}, we take the reduction from Section \ref{sec:anti-blocking} a step further, by shaking $K$ in such a way that it is anti blocking and, in addition, located below the diagonal line passing through $(2/\sukz_1) e_1$ and $(2/\sukz_1)e_2$ (cf.\ Figure \ref{fig:process}).

To this end, we consider non-orthogonal shakings as a generalization of the Blaschke shakings in Section \ref{sec:anti-blocking}; For an affine line $\ell\subseteq\R^2$ and a vector $u\in\R^2\setminus\{0\}$ which is not parallel to $\ell$, let $\pi_{u,\ell}$ denote the projection on $\ell$ along $u$. For $K\in\K^2$, we then define 
\begin{equation}
    \blaschke_{u,\ell}(K) = \bigcup_{x\in\pi_{u,\ell}(K)}\left[x,\, x+\vol_1\left(K\cap(x+\R u)\right)\frac{u}{|u|} \right] 
\end{equation}
as the Blaschke shaking of $K$ with respect to $u$ and $\ell$. Note that in the setting of Section \ref{sec:anti-blocking}, we have $\blaschke_u = \blaschke_{u,u^\perp}$.

As we saw in Section \ref{sec:anti-blocking}, it is enough to prove Theorem \ref{thm:dim2} for $K\in\K^2$ anti-blocking. Starting with anti-blocking body $K$ that satisfies  $\norm{e_i}{K}=\sukz_i(K)$ (cf.\ Lemma \ref{lemma:succminantiblocking}), we construct a new body $A$ by shaking $K$ first vertically and then horizontally from below against a lattice diagonal $D=\{x\in\R^2: x_1+x_2=m\}$, $m\in\Z$, and finally back down on $e_2^\perp$. (The value $m\in\Z$ may be chosen arbitrarily since lattice translations do not change the involved parameters.) Formally, we define $A=T(K)$, where 
\begin{equation}
    T = \blaschke_{e_2}\circ \blaschke_{-e_1,D}\circ \blaschke_{-e_2,D}.
\end{equation}

\begin{figure}
     \centering
     \begin{subfigure}[t]{0.5\textwidth}
         \centering
         \includegraphics[width=\textwidth]{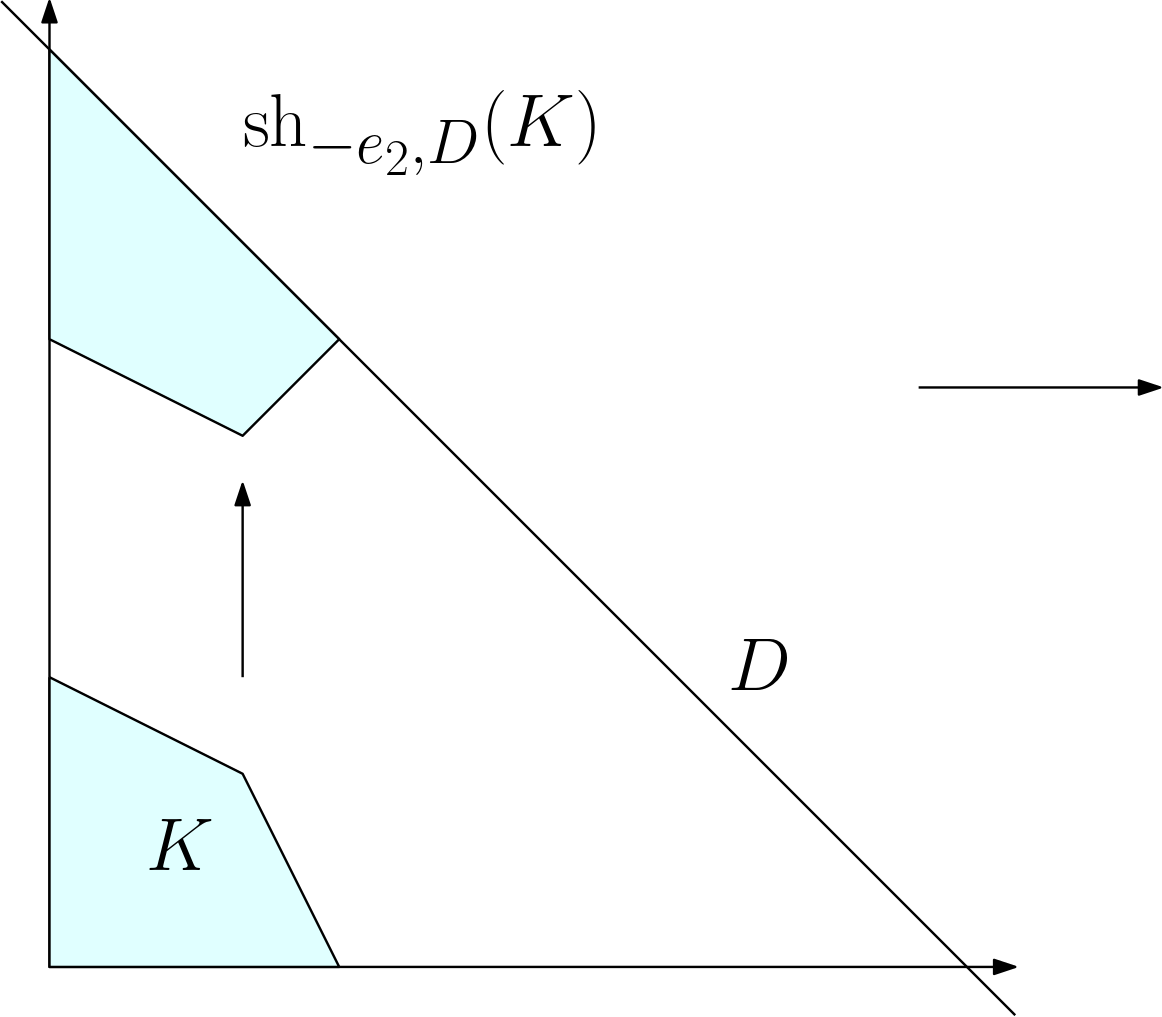}
         \caption{First, $K$ is pushed under the diagonal $D$ from below, ... }
         \label{fig:process1}
     \end{subfigure}
     \hfill
     \begin{subfigure}[t]{0.45\textwidth}
         \centering
         \includegraphics[width=\textwidth]{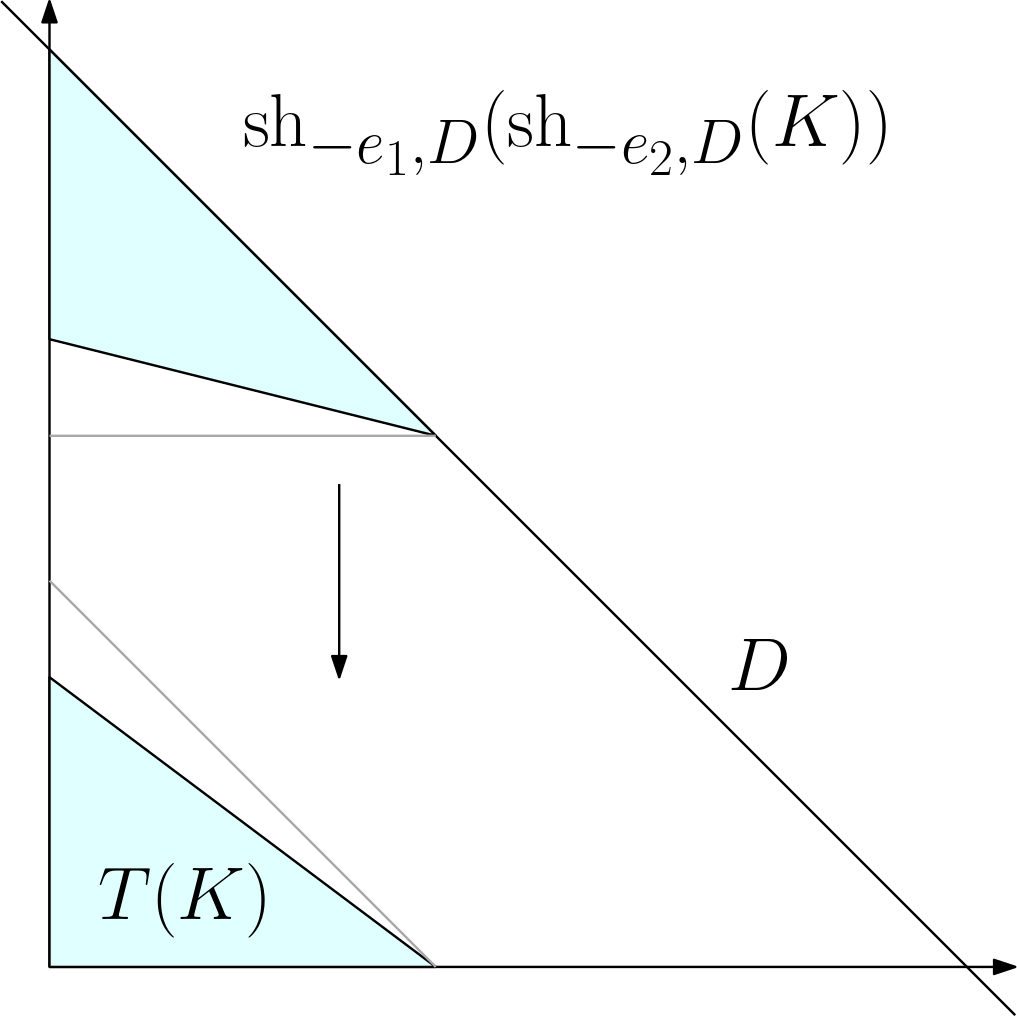}
         \caption{...next from the right and then back down on $e_2^\perp$. }
         \label{fig:process2}
     \end{subfigure}
     \hfill
        \caption{Illustration of the shaking process $T$.}
        \label{fig:process}
\end{figure}

We claim that $A$ satisfies the following properties:

\begin{lemma}
\label{lemma:A}
Let $K$ and $A$ be as above. Then the following statements hold true:
\begin{enumerate}
    \item $A$ is convex,
    \item $A$ is anti-blocking,
    \item $\vol(A) = \vol(K)$,
    \item $\LE(A) \geq \LE(K)$,
    \item $\LE(\inter A)\leq \LE(\inter K)$,
    \item $\sukz_1(A) \leq \sukz_1(K)$,
    \item $\sukz_2(A) = \sukz_2(K)$ and
    \item $A\subseteq \{x\in\R^2: x_1+x_2\leq 2/\sukz_1(A)\}$.
\end{enumerate}
\end{lemma}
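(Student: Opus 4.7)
My plan is to analyze the composition $T = \blaschke_{e_2} \circ \blaschke_{-e_1, D} \circ \blaschke_{-e_2, D}$ step by step, first extracting general Blaschke-shaking properties and then carrying out a concrete geometric analysis of the composite. For the non-orthogonal shakings $\blaschke_{-e_i, D}$, an affine change of coordinates sending $D$ to a coordinate hyperplane and the shaking direction to its normal reduces to the orthogonal case of Lemma \ref{lemma:blaschke}, so convexity, volume, and the identity $\norm{u}{\blaschke_{u,\ell}(K)} = \norm{u}{K}$ persist. Moreover, $D$ is a lattice line and $-e_1,-e_2,e_2$ are lattice directions, so the interval-translation argument of Lemma \ref{lemma:blaschke}(v), (vi) applies verbatim to each step. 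Composing the three shakings then immediately yields (i), (iii), (iv), and (v).

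For the remaining properties I would work with the anti-blocking description $K = \{(a,b) : 0 \leq a \leq \alpha_K,\ 0 \leq b \leq h(a)\}$, where $h$ is concave and non-increasing on $[0, \alpha_K]$. Lemma \ref{lemma:succminantiblocking} combined with $\norm{e_i}{K} = \sukz_i(K)$ translates into $\alpha_K \geq h(0)$. Setting $g(a) = a + h(a)$ and $L_1(c) = \vol_1(K_1 \cap \{x_2 = c\})$, a direct computation shows that $K_1$'s vertical section at $x_1 = a$ is $[m-a-h(a), m-a]$, that the horizontal section of $K_2$ at $x_2 = c$ is $[m-c-L_1(c), m-c]$ with $m-c-L_1(c) \geq 0$, and that the top of $K_2$ lies on $D$. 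Writing $\beta(a) = \min\{c : (a, c) \in K_2\}$ for the lower boundary of $K_2$, this gives $L(a) := \vol_1(K_2 \cap \{x_1 = a\}) = (m-a) - \beta(a)$, and yields two identities: $\min_a \beta(a) = m - \max g = m - \max_K(x_1 + x_2)$, so the maximal $x_1$-coordinate in $K_2$ (hence in $A$) equals $\max_K(x_1 + x_2)$; and $L(0) = h(0)$, using $\alpha_K \geq h(0)$ to identify the range $c \in [m-h(0), m]$ on which $K_2$ meets the line $\{x_1 = 0\}$.

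The main obstacle is (ii): I need to show that $L$ is non-increasing, which together with $K_2 \subseteq \{x_1 \geq 0\}$ and the final vertical shaking will imply $A$ is anti-blocking. Differentiating $f(\beta(a)) = m - a$ for $f(c) := L_1(c) + c$, the inequality $L'(a) \leq 0$ reduces to $L_1'(c) \geq 0$ on the range $\beta([0, 2/\sukz_1(A)]) = [m-\max g, m-h(0)]$. On that range $L_1(c) = (m-c) - g^{-1}(m-c)$ for the inverse of the increasing branch of $g$, and the inequality $g'(a) = 1 + h'(a) \leq 1$ (which is precisely the content of $h$ being non-increasing) gives $L_1'(c) \geq 0$ as required. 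This is the genuine geometric step in the proof; (iii)-(v) follow routinely from Lemma \ref{lemma:blaschke}, but the composite $T$ preserving anti-blocking really uses the concavity and monotonicity of $h$ together with the structure of the shaking against the lattice diagonal $D$.

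With (ii) in place, Lemma \ref{lemma:succminantiblocking} identifies $\norm{e_1}{A} = 2/\alpha_A$ and $\norm{e_2}{A} = 2/h_{\max}(A)$ with $\sukz_1(A)$ and $\sukz_2(A)$, where $\alpha_A$ and $h_{\max}(A)$ denote the $e_1$- and $e_2$-extents of $A$. The identity $\alpha_A = \max_K(x_1+x_2) \geq \alpha_K$ (the latter because $(\alpha_K, 0) \in K$) yields (vi); the identity $h_{\max}(A) = L(0) = h(0) = h_{\max}(K)$ yields (vii); and for (viii), any $(a, b) \in A$ satisfies $a + b \leq a + L(a) = m - \beta(a) \leq m - \min_a \beta(a) = \max_K(x_1 + x_2) = \alpha_A = 2/\sukz_1(A)$.
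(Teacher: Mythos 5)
Your argument is correct in substance, but for the heart of the lemma --- items (ii), (vi), (vii), (viii) --- it takes a genuinely different, more computational route than the paper. The paper never touches section functions: it sandwiches $K$ between the triangle $\Delta=\conv\{0,(2/\sukz_1)e_1,(2/\sukz_2)e_2\}$ and the box $B=[0,2/\sukz_1]\times[0,2/\sukz_2]$, notes $T(\Delta)=\Delta$, and uses the monotonicity of shakings (Lemma \ref{lemma:inclusion}) to conclude that every vertical segment of $A\subseteq T(B)$ has length at most $2/\sukz_2(K)$ while the segment over the origin has exactly that length; anti-blockingness and both successive-minima statements then fall out with no calculus, and (viii) follows from an enclosing triangle for $\blaschke_{-e_1,D}(\blaschke_{-e_2,D}(K))$. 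Your parametrization by the upper boundary $h$, with $g=\mathrm{id}+h$ and the reduction of $L'\le 0$ to $g'\le 1$, is more explicit and has the virtue of isolating exactly where $\sukz_1\le\sukz_2$ (i.e.\ $\alpha_K\ge h(0)$) and the monotonicity of $h$ enter; the price is differentiability assumptions that must be removed by approximation, which the paper's sandwiching avoids entirely. Your handling of (i) and (iii)--(v) via a unimodular shear reducing $\blaschke_{-e_i,D}$ to an orthogonal shaking in a lattice direction is also a legitimate alternative to the paper's direct arguments (the paper proves convexity of general non-orthogonal shakings from Lemma \ref{lemma:inclusion} and reruns the interval-translation argument using $\pi_{-e_i,D}(\Z^2)=\Z^2\cap D$). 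One small inaccuracy to repair: the identity $L_1(c)=(m-c)-g^{-1}(m-c)$ is false on $[\,m-\max g,\ m-\alpha_K)$ whenever $h(\alpha_K)>0$, because there the right endpoint of the horizontal section of $K_1$ is not the point of $D$ at that height but lies at $x_1\le\alpha_K<m-c$. The conclusion $L_1'\ge 0$ nevertheless survives on that subrange (both endpoints of the section still move the right way as $c$ increases), and in any case concavity of $L$, which you already have from (i), propagates $L'\le 0$ from $[0,\alpha_K]$ to all of $[0,\alpha_A]$; but as written the formula does not cover the whole range you claim for it.
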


We will prove Lemma \ref{lemma:A} at the end of this section. For the proof of Theorem \ref{thm:dim2}, we also need the following estimates, which follow from elementary properties of concave functions:

\begin{lemma}
\label{lemma:concavefunc}
Let $f:[a,b]\to\R$ be a concave function, then we have
\begin{equation}
\frac{1}{2}\big(f(a)+f(b)\big)(b-a) \leq \int_a^b f(t)\,\mathrm d t.
\end{equation}
Moreover, if $f'(a)$ exists, we also have
\begin{equation}
 \int_a^b f(t)\,\mathrm d t   \leq (b-a)\Big(f(a) + (b-a)\frac{1}{2}f^\prime(a)\Big).
\end{equation}
\end{lemma}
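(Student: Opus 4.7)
The plan is to exploit the two standard one-sided geometric characterisations of a concave function: it lies \emph{above} any of its secants and \emph{below} any of its supporting tangents. Both inequalities then follow by integrating a pointwise comparison with an affine function.

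For the first inequality, let $\ell:[a,b]\to\R$ be the affine function interpolating $f$ at the endpoints, i.e.
\begin{equation}
\ell(t) = f(a) + \frac{f(b)-f(a)}{b-a}(t-a).
\end{equation}
Since $f$ is concave on $[a,b]$, for every $t\in[a,b]$ we can write $t = (1-s)a + sb$ with $s=(t-a)/(b-a)\in[0,1]$, so that $f(t)\geq (1-s)f(a)+sf(b) = \ell(t)$. Integrating this pointwise inequality and evaluating $\int_a^b \ell(t)\,\mathrm d t = \tfrac12\bigl(f(a)+f(b)\bigr)(b-a)$ (the area of the trapezoid) yields the first bound.

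For the second inequality, since $f$ is concave and $f'(a)$ exists (interpreted as the right-derivative at the left endpoint, which exists in any case for a concave function on a compact interval), the tangent line at $a$ supports $f$ from above on $[a,b]$:
\begin{equation}
f(t)\leq f(a) + f'(a)(t-a), \qquad t\in [a,b].
\end{equation}
Integrating this bound over $[a,b]$ gives
\begin{equation}
\int_a^b f(t)\,\mathrm d t \leq (b-a)f(a) + f'(a)\cdot\frac{(b-a)^2}{2} = (b-a)\Bigl(f(a)+\frac{b-a}{2}f'(a)\Bigr),
\end{equation}
as claimed.

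There is no real obstacle: the only point requiring a word of care is to justify the supporting tangent in the second part using the concavity of $f$, but assuming the hypothesis that $f'(a)$ exists this is immediate from the fact that for concave $f$ the difference quotients $(f(t)-f(a))/(t-a)$ are non-increasing in $t$, so their supremum at $t\to a^+$ (which equals $f'(a)$) dominates every single one of them.
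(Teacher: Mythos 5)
Your proof is correct and follows exactly the same route as the paper: the secant line lies below the concave function (giving the trapezoidal lower bound) and the tangent at $a$ lies above it (giving the upper bound). Your extra remark justifying the supporting tangent via monotone difference quotients is a welcome addition that the paper leaves implicit.
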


\begin{proof}
For the upper bound, let $g$ be the affine linear function given by $g(a)=f(a)$ and $g(b)=f(b)$, i.e., $g(t) = \frac{f(b)-f(a)}{b-a}(t-a) + f(a)$. By concavity, we have $f\geq g$ and therefore 
\begin{equation}
    \int_a^b f(t)\,\mathrm d t \geq \int_a^b g(t)\,\mathrm d t = \frac{1}{2}\big(f(a)+f(b)\big)(b-a).
\end{equation}
For the upper bound let $h$ be the tangent of $f$ at $a$, i.e., $h(t) = f^\prime (a)(t-a)+f(a)$. Again by concavity, we have $h\geq f$ and, thus,
\begin{equation}
    \int_a^b f(t)\,\mathrm d t \leq \int_a^b h(t)\,\mathrm d t \leq  (b-a)\Big(f(a) + (b-a)\frac{1}{2}f^\prime(a)\Big)\qedhere
\end{equation}
\end{proof}

\begin{proof}[Proof of Theorem \ref{thm:dim2}]
We write $\sukz_i = \sukz_i(K)$, $i=1,2$. In view of Theorem \ref{prop:anti-blocking} and Lemma \ref{lemma:A}, we can assume that $K$ is an anti-blocking body with $\norm{e_i}{K}=\sukz_i$ and $K\subseteq\{x\in\R^2:x_1+x_2\leq 2/\sukz_1\}$. We let $\ell_t=\{x\in\R^2:x_2=t\}$ denote the horizontal line at height $t\in\R$ and we consider
\begin{equation}
    f:\Big[0,\,\frac{2}{\sukz_2}\Big]\to\R,\, t\mapsto \vol_1(K\cap\ell_t).
\end{equation}
We observe that $\suk(K,e_2)=2/\sukz_2$ holds. Since $K$ is convex, this implies that $f$ is concave. Moreover, since $K$ is anti-blocking, $f$ is decreasing. From the inclusion $K\subseteq\{x\in\R^2:x_1+x_2\leq 2/\sukz_1\}$ it follows that 
\begin{equation}
    \label{eq:diagest}
    f(t)\leq f(0) -t = \frac{2}{\sukz_1}-t
\end{equation}
holds for all $t\in[0,2/\sukz_2]$.
Exploiting the fact that the sections $K\cap\ell_t$ are 1-dimensional, we obtain that

\begin{align}
    \LE(K) &= \sum_{i=0}^{\lfloor 2/\sukz_2\rfloor} \LE(K\cap\ell_t) \leq \sum_{i=0}^{\lfloor 2/\sukz_2\rfloor} \big(f(i) +1\big) \\
    &= \frac{1}{2}(f(0) + f(\lfloor 2/\sukz_2\rfloor) + \sum_{i=1}^{\lfloor 2/\sukz_2\rfloor}\Big(\frac{1}{2}\big(f(i-1)+f(i)\big)\Big) + \lfloor 2/\sukz_2\rfloor + 1.
\end{align}

We apply \eqref{eq:diagest} to $f(\lfloor 2/\sukz_2\rfloor)$ and the lower bound in Lemma \ref{lemma:concavefunc} to the summands $\frac{1}{2}\big(f(i-1)+f(i)\big)$ and deduce
\begin{align}
    \LE(K) &\leq \frac{2}{\sukz_1} - \frac{1}{2}\lfloor 2/\sukz_2\rfloor + \int_0^{\lfloor 2/\sukz_2\rfloor} f(t)\,\mathrm{d} t + \lfloor 2/\sukz_2\rfloor + 1\\
    &\leq \vol(K) + \frac{2}{\sukz_1}+ \frac{1}{\sukz_2} + 1\\
    & \leq \vol(K)\Big( 1+\sukz_2 + \frac{\sukz_1}{2} +\frac{\sukz_1\sukz_2}{2}\Big)\\
    & = \vol(K) \Big(1+\frac{\sukz_1}{2}\Big)\big(1+\sukz_2\big),
\end{align}
where we used the lower bound in Minkowski's second theorem \eqref{eq:minkowski2ndtheorem}, applied to the summands $2/\sukz_1$, $1/\sukz_2$ and $1$ each,  to obtain the third line. This shows the upper bound \eqref{eq:dim2upper} of the theorem.

For the lower bound, we assume that $f$ is differentiable. Else, we might approximate $f$ with a linear spline $\varphi$ from below. $\varphi$ in turn can be approximated by a smooth concave function $g$ from below by rounding its corners. This function satisfies $g(0)=f(0)$, and thus, the anti-blocking convex body $$K^\prime = \{ (x,y)\in\R^2: 0\leq y \leq 2/\sukz_2,\,0\leq x\leq g(y)\}\subseteq K$$ is located underneath the diagonal $\{ x\in\R^2 :x_1+x_2 = 2/\sukz_2(K^\prime)\}$ as well.

We observe that $\lceil 2/\sukz_2 -1\rceil$ is the height of the highest horizontal integer line that intersects $\inter K$. Therefore, we can estimate the number of interior lattice points of $K$ as follows:
\begin{align}
    \LE(\inter K) &= \sum_{i=1}^{\lceil 2/\sukz_2 -1\rceil} \LE(\inter K\cap\ell_i) \geq \sum_{i=1}^{\lceil 2/\sukz_2 -1\rceil}\big(f(i) -1\big)\\
    &= \sum_{i=1}^{\lceil 2/\sukz_2 -1\rceil}f(i) - \lceil 2/\sukz_2 -1\rceil.
\end{align}
We use the upper bound in Lemma \ref{lemma:concavefunc} in order to estimate \begin{equation}
    f(i)\geq \int_{i}^{i+1}f(t)\,\mathrm d t - \frac 12 f^\prime(i),
\end{equation}
for all $1\leq i < \lceil 2/\sukz_2 -1\rceil$ and, since $2/\sukz_2-\lceil 2/\sukz_2 -1\rceil\leq 1$, 
\begin{equation}
    f\big(\lceil 2/\sukz_2 -1\rceil\big)\geq \int_{\lceil 2/\sukz_2 -1\rceil}^{2/\sukz_2}f(t)\,\mathrm d t - (2/\sukz_2 - \lceil 2/\sukz_2 -1\rceil) \frac 12 f^\prime\big(\lceil 2/\sukz_2 -1\rceil\big).
\end{equation}
Combining this, we obtain 
\begin{align}\label{eq:someest}
    \LE(\inter K) \geq & \sum_{i=1}^{\lceil 2/\sukz_2 -1\rceil-1}\int_{i}^{i+1} f(t)\,\mathrm d t + \int_{\lceil 2/\sukz_2 -1\rceil}^{\sukz_2/2}f(t)\,\mathrm d t\\
    & + \frac{1}{2} \bigg( \sum_{i=1}^{\lceil 2/\sukz_2 -1\rceil-1} \big(-f^\prime(i)\big) - (2/\sukz_2 -\lceil 2/\sukz_2 -1\rceil) f^\prime\big(\lceil 2/\sukz_2 -1\rceil\big)\Big)\\
    & - \lceil 2/\sukz_2 -1\rceil\\
    =& \,\vol(K) - \int_0^1 f(t)\,\mathrm d t\\
    & + \frac{1}{2} \bigg( \sum_{i=1}^{\lceil 2/\sukz_2 -1\rceil-1} \big(-f^\prime(i)\big) - (2/\sukz_2 -\lceil 2/\sukz_2 -1\rceil) f^\prime\big(\lceil 2/\sukz_2 -1\rceil\big)\Big)\\
    & - \lceil 2/\sukz_2 -1\rceil\\
\end{align}
Due to \eqref{eq:diagest}, we have 
\begin{equation}
\label{eq:int01est}
    \int_0^1 f(t)\,\mathrm d t \leq \frac{2}{\sukz_1}-\frac{1}{2}.
\end{equation}
Next, we estimate the bracket term in the last but one line of \eqref{eq:someest}. To this end, we observe that $-f^\prime$ is increasing, since $f$ is concave, and that $-f^\prime$ is non-negative, since $K$ is anti-blocking. Therefore, we obtain that
\begin{align}
    & \sum_{i=1}^{\lceil 2/\sukz_2 -1\rceil-1} \big(-f^\prime(i)\big) - (2/\sukz_2 -\lceil 2/\sukz_2 -1\rceil) f^\prime\big(\lceil 2/\sukz_2 -1\rceil\big)\\
    \geq & \sum_{i=1}^{\lceil 2/\sukz_2 -1\rceil-1} \int_{i-1}^i -f^\prime(t)\,\mathrm d t\\
    & + \big( (2/\sukz_2 -1) - (\lceil 2/\sukz_2 -1\rceil-1)\big) (-f^\prime(2/\sukz_2-1))\\
    \geq & \int_{0}^{2/\sukz_2-1}-f^\prime(t)\,\mathrm d t = f(0)-f(2/\sukz_2-1)\\
    \geq & \frac{2}{\sukz_2}-1,
\end{align}
where we used \eqref{eq:diagest} in the last step. Substituting this and \eqref{eq:int01est} into \eqref{eq:someest} yields 
\begin{align}
    \LE(\inter K) & \geq \vol(K) - \frac{2}{\sukz_1} + \frac{1}{2} + \frac{1}{2}\Big(\frac{2}{\sukz_2}-1\Big) - \left\lceil \frac{2}{\sukz_2} -1\right\rceil\\
    &\geq \vol(K)-\frac{2}{\sukz_1}-\frac{1}{\sukz_2} = \vol(K)\Big(1-\frac{1}{\vol(K)}\Big(\frac{2}{\sukz_1}+\frac{1}{\sukz_2}\Big)\Big)\\
    &\geq \vol(K)\Big(1-\frac{\sukz_1}{2}-\sukz_2\Big),
\end{align}
where we used $\vol(K) \geq \frac{2}{\sukz_1\sukz_2}$ in the last step. Therefore, the proof of \eqref{eq:dim2lower} is finished. 

\end{proof}

It remains to prove Lemma \ref{lemma:A}. We will use that non-orthogonal Blaschke shakings are monotonous:

\begin{lemma}
\label{lemma:inclusion}
Let $K,L\in\K^2$ with $K\subseteq L$. Also, let $\ell\subseteq\R^2$ be a line and $u\in\R^n\setminus\{0\}$ be not parallel to $\ell$. Then we have
$$\blaschke_{u,\ell}(K) \subseteq\blaschke_{u,\ell}(L).$$
\end{lemma}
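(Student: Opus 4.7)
My plan is to read off the inclusion directly from the fiber-wise definition of the Blaschke shaking. Recall that $\blaschke_{u,\ell}(K)$ is built as a union, indexed over the projection $\pi_{u,\ell}(K)\subseteq\ell$, of closed segments based at $x\in\ell$, pointing in direction $u$, and having length $\vol_1(K\cap(x+\R u))$. Both ingredients --- the projection and the length of the one-dimensional fiber --- are visibly monotone with respect to set inclusion, so the result should drop out.

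Concretely, I would proceed in two short steps. First, since projections preserve inclusions, $\pi_{u,\ell}(K)\subseteq \pi_{u,\ell}(L)$; so every index $x$ appearing in the union that defines $\blaschke_{u,\ell}(K)$ also appears in the union defining $\blaschke_{u,\ell}(L)$. Second, for every such $x$ the one-dimensional intersections satisfy $K\cap(x+\R u)\subseteq L\cap(x+\R u)$, hence
\begin{equation}
    \vol_1\bigl(K\cap(x+\R u)\bigr)\leq \vol_1\bigl(L\cap(x+\R u)\bigr),
\end{equation}
and therefore
\begin{equation}
    \Bigl[x,\,x+\vol_1\bigl(K\cap(x+\R u)\bigr)\tfrac{u}{|u|}\Bigr]\subseteq \Bigl[x,\,x+\vol_1\bigl(L\cap(x+\R u)\bigr)\tfrac{u}{|u|}\Bigr].
\end{equation}
Taking the union over $x\in\pi_{u,\ell}(K)$ on the left and over $x\in\pi_{u,\ell}(L)\supseteq\pi_{u,\ell}(K)$ on the right yields $\blaschke_{u,\ell}(K)\subseteq\blaschke_{u,\ell}(L)$.

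There is no real obstacle here; the only thing to be careful about is that the definition of $\blaschke_{u,\ell}$ uses the one-dimensional Lebesgue measure of the fiber (not just its diameter), which is precisely what makes it monotone in $K$. Convexity of $K$ and $L$ plays no role beyond ensuring that the fibers are intervals and the projections are intervals in $\ell$, so the argument is purely set-theoretic.
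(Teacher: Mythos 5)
Your proof is correct and follows essentially the same route as the paper's: both arguments compare, fiber by fiber over the projection onto $\ell$, the one-dimensional volumes $\vol_1(K\cap(x+\R u))\leq\vol_1(L\cap(x+\R u))$ and conclude that the corresponding segments (and hence their unions) are nested. The only difference is presentational --- the paper verifies membership of an arbitrary point of $\blaschke_{u,\ell}(K)$ in $\blaschke_{u,\ell}(L)$, while you phrase the same comparison as a union-wise containment.
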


This is a widely known fact in the context of classical Blaschke shakings (cf.\ \cite[Lemma 1.1 (iii)]{ccg}).

\begin{proof}
Let $\blaschke=\blaschke_{u,\ell}$ and $\pi = \pi_{u,\ell}$. Consider a point $x\in\blaschke(K)$. Then we have $x\in\pi(K)\subseteq\pi(L)$. Also, by inclusion, we have  $$l_1:=\vol_1\big(K\cap (x+\R u)\big)\leq\vol_1\big(L\cap (x+\R u)\big)=:l_2.$$
 Hence, since $x\in\blaschke(K)$, \[ x\in \Big[\pi(x), \pi(x) + l_1\frac{u}{|u|}\Big]\subseteq  \Big[\pi(x), \pi(x) + l_2\frac{u}{|u|}\Big] \subseteq \blaschke(L).\hfill \qedhere \]
\end{proof} 

\begin{proof}[Proof of Lemma \ref{lemma:A}]

For i) we show that $\blaschke_{u,\ell}(K)$ is convex for all $u$, $K$ and $\ell$ as in Lemma \ref{lemma:inclusion}. To see this, we consider  $x,y\in \blaschke_{u,\ell}(K)$. Let $\overline{x}$ and $\overline{y}$ denote the points in $K$ on the lines $x+\R u$ and $y+\R u$ respectively that minimize $\langle \cdot, u\rangle$. Then, the points $$\widetilde{z}=\overline{z} + |z-\pi_{u,\ell}(z)|\cdot u/|u|,\quad z\in\{x,y\},$$ are contained in $K$. Lemma \ref{lemma:inclusion} then yields that     $$ \conv\{\pi_{u,\ell}(x), \pi_{u,\ell}(y), x,y\} = \blaschke_{u,\ell}\big(\conv\{\overline{x},\widetilde{x}, \overline{y}, \widetilde{y}\}\big) \subseteq\blaschke_{u,\ell}(K). $$
In particular, $[x,y]\subseteq\blaschke_{u,\ell}(K)$, which shows that $\blaschke_{u,\ell}(K)$ is convex. Thus, $A$ is convex as well.
    
Next, we consider the box $B = [0,2/\sukz_1(K)]\times[0,2/\sukz_2(K)]$. Clearly, we have  $K\subseteq B$ and by Lemma \ref{lemma:inclusion} it follows that $A\subseteq T(B)$. The vertical segments $\blaschke_{-e_2,D}$ are of length $2/\sukz_2(K)$. The vertical segments in $\blaschke_{-e_1,D}(\blaschke_{-e_2,D}(B))$ are also not longer than those in $\blaschke_{-e_2,D}(K)$. Therefore, all vertical segments in $T(B)$ (and thus also in $A$) are of length at most $2/\sukz_2(K)$ (cf.\ Figure \ref{fig:boxshaking}). On the other hand, by considering the triangle 
$$\Delta=\conv\{0,2/\sukz_1(K) e_1, 2/\sukz_2(K) e_2\}\subseteq K,$$ which fulfills $T(\Delta)= \Delta$ because of $\sukz_1(K)\leq\sukz_2(K)$, we see that the segment over the origin in $A$ has length precisely $2/\sukz_2(K)$. Since by construction we have $A\cap e_2^\perp = A|e_2^\perp$, we obtain from this  $A\cap e_1^\perp= A|e_1^\perp $ as well. Therefore, $A$ is anti-blocking and fulfills $\norm{e_2}{A}=\sukz_2(K)$, as well as $\norm{e_1}{A}\leq\norm{e_1}{\Delta} = \sukz_1$. So we obtained ii), vi) and vii). 

\begin{figure}
     \centering
     \begin{subfigure}[t]{0.5\textwidth}
         \centering
         \includegraphics[width=\textwidth]{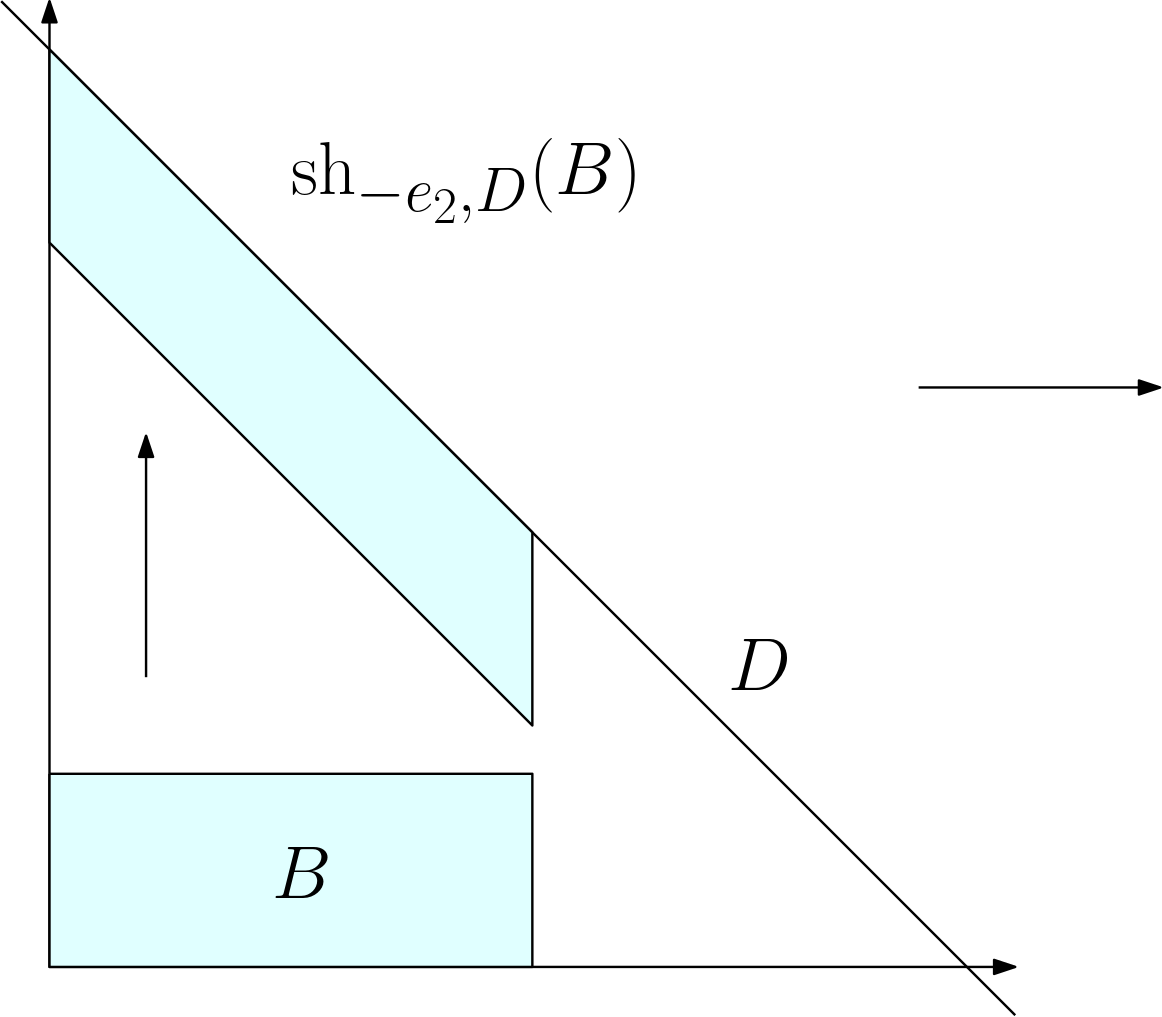}
         \caption{Applying $\blaschke_{-e_1,D}$ to the box $B$ does not change the lengths of the vertical segments.}
         \label{fig:sh-e2}
     \end{subfigure}
     \hfill
     \begin{subfigure}[t]{0.45\textwidth}
         \centering
         \includegraphics[width=\textwidth]{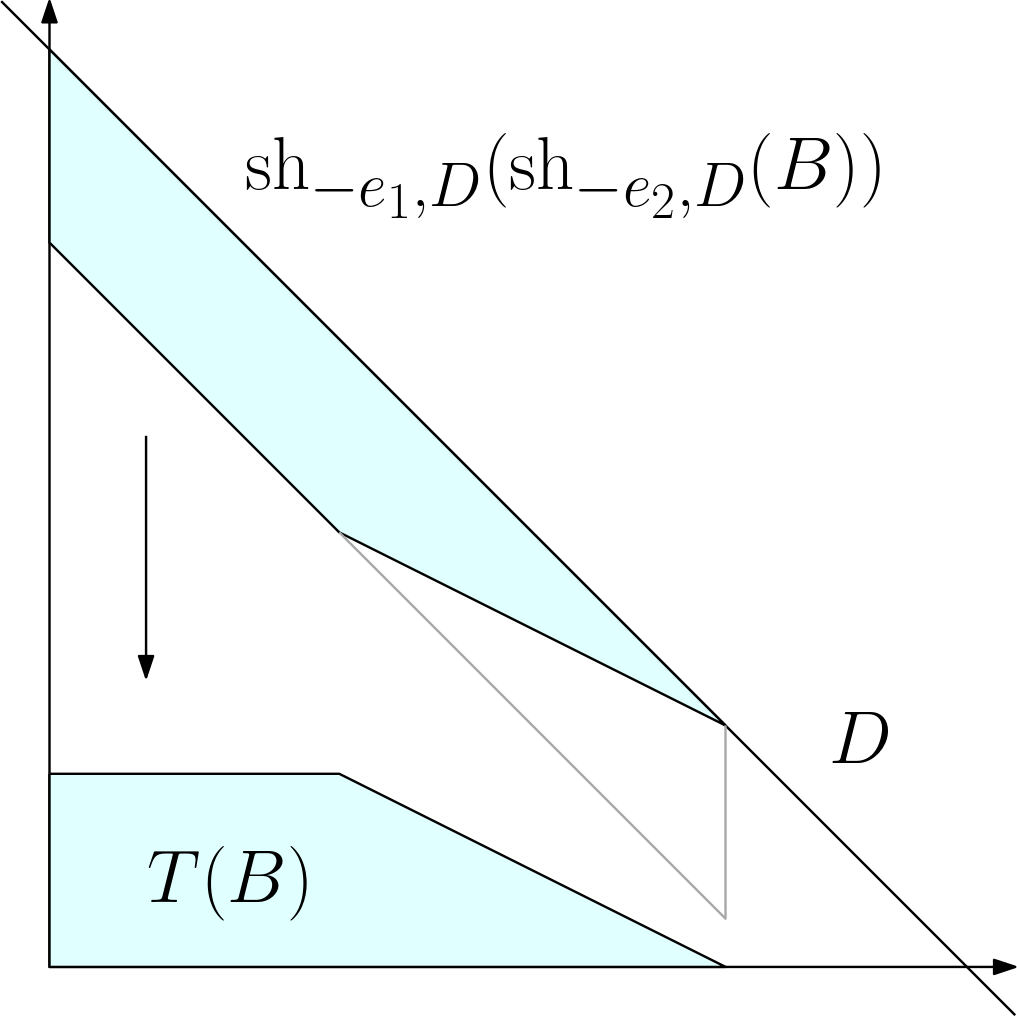}
         \caption{Applying $\blaschke_{-e_1,D}$ does not lead to any vertical segments of length higher than $ 2/\sukz_2(K)$ as they are all contained in the grey area.}
         \label{fig:she2}
     \end{subfigure}
     \hfill
        \caption{Behaviour of the vertical segments when passing from $B$ to $T(B)$}
        \label{fig:boxshaking}
\end{figure}

iii) follows from Fubini's Theorem applied to $\spann\{u\}$ and $u^\perp$, since also for arbitrary non-orthogonal shakings one has that $\blaschke_{u,\ell}(K)|u^\perp=K|u^\perp$ and $\vol_1(\blaschke_{u,\ell}(K) \cap (x+\R u))=\vol_1(K\cap (x + \R u))$, for any $x\in u^\perp$.

iv) and v) are proven in the same way as Lemma \ref{lemma:blaschke} v) and vi), since $\pi_{-e_i,D}(\Z^2) = \Z^2\cap D$ and, thus, all the shaken lattice segments in $e_i$-direction of any $\blaschke_{-e_i,D}(K)$ contain a lattice point on $D$ as an endpoint.

For viii), let $p$ denote the lowest point on the diagonal (with respect to $e_2$) such that $p\in \blaschke_{-e_1,D}(\blaschke_{-e_2,D}(K))$. Then we have  
\begin{equation}
    \blaschke_{-e_1,D}(\blaschke_{-e_2,D}(K))\subseteq \conv\{me_2, p|e_1^\perp, p\}, 
\end{equation}
where $m$ is the integer with the property that $me_2\in D$ (cf.\ Figure \ref{fig:process}). Applying $\blaschke_{e_2}$ to both sides of the inclusion yields that
\[A\subseteq\conv\{0,2/\sukz_1(A) e_1, 2/\sukz_1(A)e_2\}\subseteq\{x\in\R^2:x_1+x_2\leq 2/\sukz_1(A)\}.\qedhere\]
\end{proof}

\section{Acknowledgments}

The authors wish to thank Martin Henk and Matthias Schymura for helpful comments on the paper.


\vspace*{-0.5cm} 

\end{document}